\newcommand{\IR}{\ensuremath{\mathbb{R}}}
\newcommand{\IN}{\ensuremath{\mathbb{N}}}
\newcommand{\IE}{\ensuremath{\mathbb{E}}}
\newcommand{\EE}{\mathbb{E}}
\newcommand{\NN}{\mathbb{N}}
\newcommand{\PP}{\mathbb{P}}
\newcommand{\RR}{\mathbb{R}}
\newcommand{\R}{\mathbb{R}}
\renewcommand{\rho}{\varrho}
\newcommand{\norm}[1]{\left\Vert#1\right\Vert}
\newcommand{\Bnorm}[1]{\Big\Vert#1\Big\Vert}
\newcommand{\abs}[1]{\left|#1\right|}
\DeclareMathOperator{\dist}{dist}
\DeclareMathOperator{\vol}{vol}
\DeclareMathOperator{\inj}{inj}
\newcommand{\dd}{\mathrm{d}}
\newcommand{\dvol}{\mathrm{dvol}}
\newcommand{\bfone}{\mathbf{1}}
\newcommand{\ls}{\lesssim}
\newcommand{\gs}{\gtrsim}
\newtheorem{thm}{Theorem}
\newtheorem{cor}{Corollary}
\theoremstyle{plain}
\newtheorem{lemma}{Lemma}
\newtheorem{prop}{Proposition}
\theoremstyle{definition}
\newtheorem{rem}{Remark}
\title[Function recovery on manifolds]{Function recovery on manifolds\\ using scattered data}
\author[DK]{David Krieg and Mathias Sonnleitner}
\address[]{Institut f\"ur Analysis, 
Johannes Kepler Universit\"at Linz,\\ Altenbergerstrasse 69, 4040 Linz, Austria}
\curraddr[D. Krieg]{Faculty of Computer Science and Mathematics, 
University of Passau, Innstrasse 33, 94032 Passau, Germany}
\email{david.krieg@uni-passau.de} 
\curraddr[M. Sonnleitner]{Institute of Mathematical Stochastics, 
University of M\"unster, Orl\'eans-Ring 10, 48149 Münster, Germany}
\email{mathias.sonnleitner@uni-muenster.de}
\subjclass[2020]{41A25, 41A55, 41A63, 58C35, 62D05, 65D15}
\keywords{Bessel potential space, discrepancy, numerical integration, Sobolev space, sphere}
\date{\today}
\begin{document}

\begin{abstract}	
	We consider the task of recovering a Sobolev function on a connected compact Riemannian manifold $M$ 
when given a 
sample on a finite point set.
We prove that the quality of the sample is given by the $L_\gamma(M)$-average
of the geodesic distance to the point set and determine the value of $\gamma\in (0,\infty]$. 
This extends our findings on bounded convex domains [IMA~J.~Numer.~Anal., 44:1346--1371, 2024]. 
As a byproduct, we prove the optimal rate of convergence of the $n$-th minimal worst case error
for $L_q(M)$-approximation for all $1\le q \le \infty$.
 
Further, a limit theorem for moments of the average 
distance to a set consisting of i.i.d.\ uniform points is proven.
This yields that a random sample is asymptotically as good as 
an optimal sample 
in precisely those cases with $\gamma<\infty$.
In particular, we obtain that
cubature formulas with random nodes are asymptotically 
as good as optimal cubature formulas
if the weights are chosen correctly.
This closes a logarithmic gap left open by Ehler, Gr\"af and Oates [Stat.~Comput., 29:1203-1214, 2019].
\end{abstract}

\maketitle

Let $M$ be a $d$-dimensional connected compact Riemannian manifold $M$. We are interested in the problem of recovering a function $f\colon M\to \IR$ which is only known
through a finite number of function values,
obtained on a finite set of sampling points.
We study this problem in the worst case setting,
where the function is assumed to belong to the unit ball of a Bessel potential (Sobolev) space
\[
	H^{s}_{p}(M)
	\,:=\,
	\big\{f\in L_{p}(M)\colon \|f\|_{H^s_p(M)}:=\|(I-\Delta_M)^{s/2}f\|_{L_p(M)}<\infty\big\},
\]
where $I$ denotes the identity on $L_{p}(M)=L_p(M,\vol_M)$, $\vol_M$ the Riemannian volume measure and $\Delta_M$ the Laplace-Beltrami operator on $M$. We refer to Section~\ref{sec:sobolev} for more information about $H^s_p(M)$.

\medskip

We consider the recovery problem in $L_q(M)$ for $1\le q \le \infty$. For a recovery operator $A\colon H^{s}_{p}(M) \to L_q(M)$ the worst case error is defined by
\[
 e(A,H^s_p(M),L_q(M)) \,:=\, \sup_{\Vert f \Vert_{H^{s}_{p}(M)} \le 1} \Vert f - A(f) \Vert_{L_q(M)}.
\]
We consider recovery operators of the form
$A=\varphi \circ N_P$,
where $P \subset M$ is a given (finite) set of sampling points,
$N_P\colon H^s_p(M) \to \R^P$ is the information mapping 
of the form $N_P(f)=(f(x))_{x\in P}$
and $\varphi\colon \R^P \to L_q(M)$ is any reconstruction map.
The minimal worst case error 
that can be achieved with a sampling operator
of this form
is denoted by
\[
 e(P,H^{s}_{p}(M),L_q(M)) \,:=\, \inf_{A=\varphi \circ N_P} e(A,H^s_p(M),L_q(M)).
\]
It is a measure for the quality of 
the sampling point set $P$.
If we only allow linear
reconstruction maps $\varphi$, 
which might be beneficial for practical purposes,
the infimum is denoted by $e^{\rm lin}(P,H^{s}_{p}(M),L_q(M))$.

\medskip

We also consider the problem of integration with respect to the
Riemannian volume $\vol_M$.
The worst case error of an operator $A\colon H^s_p(M) \to \R$
for the integration problem on $H^s_p(M)$ 
is defined by
\[
e(A,H^s_p(M),{\rm INT}) \,:=\, \sup_{\Vert f \Vert_{H^s_p(M)} \le 1} \left| \int f\, \dvol_M - A(f) \right|.
\]
The minimal worst case error 
that can be achieved with a given set of sampling points $P\subset M$
is denoted by
\[
 e(P,H^s_p(M),{\rm INT}) \,:=\, \inf_{A=\varphi \circ N_P} e(A,H^s_p(M),{\rm INT})\,,
\]
where the infimum ranges over all mappings $\varphi \colon \R^P \to \R$.
Note that, in this case, the infimum will not change 
if we only minimize over all linear 
mappings $\varphi$.
This is a classical result due to Smolyak and Bakhvalov,
see, e.g.,~Theorem 4.7 in the book by Novak and Wo\'zniakowski \cite{NW08}.
Thus, it is no restriction to assume that $A$ is of the form
\[
 A(f) \,=\, \sum_{x\in P} w_x f(x),
\]
i.e., a cubature formula with nodes $x\in P$ and weights $w_x\in\R$.
In the case of $p=2$ the optimal weights can be computed using the reproducing kernel of $H^s_2(M)$, see for example Section 2.4 in \cite{KS20}, where an analogous computation is described in the Euclidean setting.  
\medskip

The following theorem, our main result, characterizes the quality of the sampling point set $P$ for recovery and integration in terms of its distance function defined by
\[
\dist_M(\,\cdot\,,P)\colon M\to [0,\infty), \quad \dist_M(x,P):=\inf_{y\in P}\,\dist_M(x,y),
\]
where $\dist_M(x,y)$ denotes the Riemannian distance between $x,y\in M$.

\begin{thm}\label{thm:main}
Let $M$ be a connected compact Riemannian manifold of dimension $d\in\NN$
and let $1< p< \infty$, $1\le q\le \infty$ and $s>d/p$.
Then we have for any nonempty and finite point set $P\subset M$ that
\begin{align}
	\tag{\textit a}
  &e\big(P,H^s_p(M), L_q(M)\big)\, \asymp \, e^{\rm lin}\big(P,H^s_p(M), L_q(M)\big) \, \asymp \,
  \big\|\dist_M(\,\cdot\,, P)\big\|_{L_{\gamma}(M)}^\alpha \\
	\tag{\textit b}
	&e\big(P,H^s_p(M), {\rm INT}\big)\, \asymp \, e\big(P,H^s_p(M), L_1(M)\big),
\end{align}
where the implied constants are independent of $P$ and
\[
 \gamma = \left\{ \begin{array}{ll} s(1/q-1/p)^{-1},  & q<p, \\ \infty, &  q\ge p, \end{array} \right.
 \qquad
 \alpha =  \left\{ \begin{array}{ll} s,  & q<p, \\ s-d(1/p-1/q), &  q\ge p. \end{array} \right.
\]
\end{thm}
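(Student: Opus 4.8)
The plan is to transfer the known characterization on bounded convex domains from \cite{KS20} to the manifold setting via a localization argument, using the fact that a compact Riemannian manifold is locally bi-Lipschitz equivalent to a Euclidean ball. First I would set up a finite atlas $\{(U_i,\psi_i)\}_{i=1}^N$ together with a smooth partition of unity $\{\chi_i\}$ subordinate to it, where each $\psi_i$ maps $U_i$ onto a convex domain (e.g.\ a ball) in $\R^d$ and is bi-Lipschitz with respect to the geodesic distance on $U_i$; compactness guarantees finitely many charts and uniform bi-Lipschitz constants, and one may choose the charts as geodesic normal coordinate patches of radius below the injectivity radius so that geodesic distance and Euclidean distance are comparable. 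Under such a chart, the Bessel potential norm $\|\cdot\|_{H^s_p(M)}$ restricted to functions supported in $U_i$ is equivalent to the Euclidean Sobolev norm of the pushforward, and the $L_\gamma(M)$-average of the geodesic distance function is comparable, on each patch, to the Euclidean analogue; this is the content I would isolate as preliminary lemmas in Section~\ref{sec:prelim} / Section~\ref{sec:sobolev}.

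For the \textbf{upper bound} $e^{\rm lin}(P,H^s_p(M),L_q(M)) \ls \|\dist_M(\cdot,P)\|_{L_\gamma(M)}^\alpha$, I would build the reconstruction map by patching. On each chart $U_i$, apply the Euclidean result of \cite{KS20} to the point set $\psi_i(P\cap U_i')$ in the convex domain (with $U_i'$ a slightly shrunk patch so that partition-of-unity cutoffs are harmless), obtaining a linear sampling algorithm $A_i$ with worst-case error controlled by the local $L_\gamma$-average of the distance function. Then glue via $A f := \sum_i \chi_i \cdot (A_i(f|_{U_i}))$; the error estimate follows because $\sum_i \chi_i = 1$, each $\chi_i$ is a fixed smooth multiplier bounded on $H^s_p$ and $L_q$, and the local $L_\gamma$-averages sum (for $\gamma<\infty$) or maximize (for $\gamma=\infty$) to the global one up to constants depending only on the atlas. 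A subtlety is that a sampling point $x\in P$ may lie in several patches; one resolves this by assigning each point to one patch, or by noting that discarding points only increases the error, so using $P\cap U_i'$ per chart is legitimate. Another subtlety is handling the case where some patch contains no sampling point near part of its interior — but then $\dist_M(\cdot,P)$ is large there, and the trivial bound (zero algorithm) on that region is already consistent with the claimed estimate.

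For the \textbf{lower bound} $e(P,H^s_p(M),L_q(M)) \gs \|\dist_M(\cdot,P)\|_{L_\gamma(M)}^\alpha$ — which by definition dominates $e^{\rm lin}$ — I would use the standard fooling-function (bump-function) technique: for any measurable function vanishing at the sampling points one can, on a geodesic ball $B(z,r)$ with $r=\dist_M(z,P)$, place a smooth bump $b_z$ supported in that ball with $\|b_z\|_{H^s_p(M)}\asymp r^{s-d/p}$ (by scaling in normal coordinates) and $\|b_z\|_{L_q(M)}\asymp r^{d/q}$, so that $e(P)\gs r^{s-d/p+d/q}=r^\alpha$ for $q\ge p$ after normalization; taking the supremum over $z$ gives the sup-norm ($\gamma=\infty$) form. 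For $q<p$ one instead superposes bumps on a maximal family of disjoint balls $B(z_j,\dist_M(z_j,P)/2)$, chooses coefficients by a Hölder-duality / $\ell_p$-$\ell_q$ optimization exactly as in \cite{KS20}, and checks that the resulting combined fooling function has $H^s_p$-norm $\asymp 1$ while its $L_q$-norm reproduces the $L_\gamma(M)$-average with the stated exponent $\gamma=s(1/q-1/p)^{-1}$; the covering/packing step uses Bishop--Gromov-type volume comparison on the compact manifold to relate the discrete sum over $z_j$ to the integral $\int_M \dist_M(x,P)^\gamma\,\dd\vol_M$.

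The \textbf{main obstacle} I anticipate is making the localization genuinely rigorous for non-integer smoothness $s$: the Bessel potential space $H^s_p(M)$ is defined spectrally via $(I-\Delta_M)^{s/2}$, a nonlocal operator, so the claim that its norm on functions supported in a chart is equivalent to the Euclidean $H^s_p$-norm of the pushforward is not a triviality. I would handle this by invoking the standard equivalence of $H^s_p(M)$ with Besov/Triebel--Lizorkin scales defined via a partition of unity and local charts (this is exactly why the paper restricts to $1<p<\infty$, where definitions coincide — the remark after the definition of $H^s_p(M)$ flags this), citing the relevant literature on function spaces on manifolds in Section~\ref{sec:sobolev}; the multiplier bounds for the smooth cutoffs $\chi_i$ on $H^s_p$ then follow from pointwise-multiplier theorems on $\R^d$. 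The disclaimer in the paper already isolates that the lower bound for $s\notin\IN$ (and the even-dimensional discrepancy corollary) rests on the still-in-revision \cite{KS20}, so in the write-up I would state the local Euclidean ingredients as black boxes from \cite{KS20} and concentrate the new work on the gluing and the volume-comparison estimates, which are where the Riemannian geometry genuinely enters.
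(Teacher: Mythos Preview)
Your upper-bound strategy is essentially the paper's, but you misidentify the key boundary subtlety and your proposed fix does not work. The issue is not that a patch might lack sampling points where the global distance is large; it is that a point $x\in U_i$ may have its nearest sample \emph{outside} $U_i$, so that $\dist_M(x,P)$ is small while $\dist(x,P\cap U_i')$ is large. Your local algorithm $A_i$, which sees only $P\cap U_i'$, then incurs a large local error that is \emph{not} reflected in $\|\dist_M(\cdot,P)\|_{L_\gamma}$, and the ``zero algorithm on that region'' argument is false in this situation. The paper's remedy is to apply the cutoff \emph{before} the local approximation: with $f_j:=(\psi_j f)\circ\exp_j$ compactly supported in the Euclidean ball $B$, one may adjoin phantom sample points on $\partial B$ (where $f_j=0$), so the local error is governed by $\|\dist(\cdot,P_j\cup\partial B)\|_{L_\gamma(B)}^\alpha$ (this is precisely Proposition~\ref{pro:maineuclidean}(2)). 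That quantity is pointwise $\lesssim \dist_M(\exp_j(\cdot),P)$, because if the nearest point of $P$ lies outside $B_j$ the geodesic to it must cross $\partial B_j$. Your gluing $Af=\sum_i \chi_i\cdot A_i(f|_{U_i})$ with the cutoff applied \emph{after} does not give access to this mechanism.

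Your lower bound takes a genuinely different route from the paper. The paper does \emph{not} rebuild the multi-bump construction on $M$; instead it uses pigeonhole to locate a single chart $B_{i_0}$ carrying a $1/J$-fraction of $\|\dist_M(\cdot,P)\|_{L_\gamma}^\gamma$, transplants the full Euclidean fooling function from \cite{KS20} (Proposition~\ref{pro:maineuclidean}(1), which already contains the bump superposition and H\"older optimization) via $\exp_{i_0}$, and computes its $H^s_p(M)$-norm cleanly by passing to a modified partition of unity $\Psi_{i_0}$ whose distinguished member equals $1$ on $B_{i_0}$. Your direct global construction runs straight into the obstacle you yourself flag: for non-integer $s$, controlling $\|\sum_j c_j b_{z_j}\|_{H^s_p(M)}$ when the bumps are spread across many charts is nontrivial, since the Bessel norm is nonlocal and the partition-of-unity norm of Proposition~\ref{pro:equivalence} will mix bumps across the $\psi_k$. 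This can presumably be done via atomic decompositions, but the paper's pigeonhole-to-one-chart trick bypasses the difficulty entirely and treats the Euclidean lower bound as a black box.
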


Here, the notation $A(P)\asymp B(P)$ means that there exist implied constants $c,C>0$ such that $cA(P)\le B(P)\le CA(P)$ for all nonempty finite point sets $P\subset M$. Below, the notation is also used for a positive integer $n$ instead of $P$. 

\medskip

Theorem~\ref{thm:main} allows for a characterization of the quality of a set of sampling points $P$ in terms of an $L_\gamma(M)$-norm (or quasi-norm) of its distance function.  In the case $\gamma=\infty$, this is known as the covering radius of $P$ in $M$, which is the radius of the largest hole in the data set.  In the case $\gamma<\infty$, it is the radius of an average-sized hole in the data set and sometimes called the distortion of $P$ in $M$, see, e.g., Zador~\cite{Zad82}.

\medskip

In fact, Theorem~\ref{thm:main} is an extension of our previous result valid for bounded convex domains in Euclidean spaces, which reads as follows. 

\begin{thm}[{\cite[Theorem~0.1]{KS20}}]\label{thm:main-euclidean}
	Let $\Omega\subset \mathbb{R}^d$ be a bounded convex domain, $1\le p,q \le \infty$ and $s\in\mathbb{N}$ with $s>d/p$. Then the statement of Theorem~\ref{thm:main} remains true with the same $\gamma$ and $\alpha$ if $M$ is replaced by $\Omega$ and $H^s_p(M)$ is replaced by 
\[
W^s_p(\Omega):=\big\{f\in L_p(\Omega)\colon \|f\|_{W^s_p(\Omega)}:=\Big(\sum_{|\alpha|\leq s} \|D^{\alpha}f\|_{L_p(\Omega)}^p\Big)^{1/p}<\infty\big\},
\]
where $ \alpha\in \mathbb{N}_0^d $ is a multiindex, $ |\alpha|=\alpha_1+\ldots+\alpha_d $ and $D^\alpha f = \frac{\partial^{|\alpha|}}{\partial x_1^{\alpha_1}\cdots\partial x_d^{\alpha_d}} f$.
\end{thm}

In Euclidean space, this characterization was new in the case of $q<p$ and for integration and improved upon previous characterizations in terms of the covering radius given by Novak and Triebel~\cite{NT06} and Wendland \cite{Wen01,W04}.

\medskip

These results describe the quality of an arbitrary set of sampling points.
Let us now turn to the question of optimal sampling points.
It follows from elementary volume comparison arguments and Lemma~\ref{lem:ahlfors} below that, for all $0<\gamma\le \infty$ and $\alpha>0$, we have
\begin{equation}\label{eq:opt-dist}
 \inf_{P\subset M \colon |P| \le n} \norm{\dist_M(\,\cdot\,,P)}_{L_{\gamma}(M)}^\alpha \,\asymp\, n^{-\alpha/d},
\end{equation}
where the infimum is over all point sets of cardinality at most $n$. Note that exact asymptotics for this so-called minimal quantization error were obtained by Iacobelli \cite{Iac16} and Kloeckner \cite{Klo12}. In particular, Theorem~\ref{thm:main} implies the following behaviour for the sampling numbers, i.e., the sequence of minimal errors over all recovery operators using at most $n$ points.

\begin{cor}\label{cor:optimal}
Let $M$ be a connected compact Riemannian manifold of dimension $d\in\NN$
and let $1< p< \infty$, $1\le q\le \infty$ and $s>d/p$.
Then
\begin{align*}
&\inf_{P\subset M\colon |P|\le n}e^{\ast}\big(P,H^s_p(M), L_q(M)\big)\, \asymp \, n^{-s/d+(1/p-1/q)_+}\\
&\inf_{P\subset M\colon |P|\le n}e^{\ast}\big(P,H^s_p(M), {\rm INT}\big)\, \asymp \, n^{-s/d}
\end{align*}
where $e^\ast\in\{e, e^{\rm lin}\}$, the implied constants are independent of $n\in \IN$.
\end{cor}

These rates for the $n$-th minimal worst-case error are known for domains in Euclidean space, see Novak and Triebel \cite{NT06}.
It is not surprising that the same rates apply on compact Riemannian manifolds.
However, to the best knowledge of the authors,
Corollary~\ref{cor:optimal} appears to be the first result that actually provides all the rates in the Riemannian case,
even in the special case of the $d$-dimensional unit sphere $M=\mathbb{S}^d$.
In the following, we briefly survey the existing literature.
We also refer to \cite[Section~4.1]{LLG+24} for further references to the literature. 

\medskip

On the sphere, Hubbert and Morton~\cite{HM04} derive asymptotically optimal upper bounds using radial/zonal basis functions in the case of $p=2$ and $1\le q\le \infty$. In~\cite[Theorem~3.1]{Mha06} Mhaskar extends this using a similar approximation technique, but only in the case $p=q=\infty$ approximants are given in terms of function values. Building on H.~Wang and K.~Wang~\cite{WW16}, H.~Wang and Sloan~\cite{WS17} obtain Corollary~\ref{cor:optimal} in the case of $1\le p=q\le \infty$ using filtered hyperinterpolation. Improving upon Gr\"ochenig~\cite{Gro20}, Lu and H.~Wang~\cite{LW21} use a weighted least squares algorithm to obtain the optimal rates for $p=q=2$. 
 
On general manifolds, Mhaskar obtains in \cite[Theorem 3.1]{Mha10} an asymptotically optimal upper bound in the case $p=q$. In the general case, more is known about recovering only an integral instead of the function itself, see Brandolini et al.~\cite{BCC+14} and Ehler, Gräf and Oates~\cite{EGO19}.  We also note \cite[Collary~17]{KPU+23}, where a weighted least squares algorithm based on subsampled random points is used to obtain the upper bound of Corollary~\ref{cor:optimal} in the case $p=2$ and $1\le q\le\infty$. Very recently, after publication of our results, Li et al.~\cite{LLG+24}, obtain an extension of Corollary~\ref{cor:optimal} to $1\le p,q\le \infty$ and $s>d/p$ using weighted least $\ell_p$ approximation.

\medskip

The majority of the previously studied asymptotically optimal recovery algorithms rely on sequences of $n$-point sets $P_n=\{x_1^{(n)},\dots,x_n^{(n)}\}\subset M$ satisfying an Marcinkiewicz-Zygmund condition, i.e., the discrete and continuous $p$-norms are equivalent for the first eigenfunctions of $\Delta_{M}$. A sufficient condition is that $(P_n)$ is quasi-uniform, i.e., that 
\[
\|\dist_M(\cdot,P_n)\|_{\infty}\asymp \min_{1\le i\neq j\le n}\dist_M(x_i^{(n)},x_j^{(n)})\asymp n^{-1/d},
\]
see Filbir and Mhaskar~\cite{FM11}. 

Theorem~\ref{thm:main} improves upon this for $L_q$-approximation in the case $q<p$ by requiring only $\|\dist_M(\cdot,P_n)\|_{L_{\gamma}(M)}\asymp n^{-1/d}$. For integration we require this for $\gamma=sp^*$ with $\frac{1}{p}+\frac{1}{p^*}=1$ which improves upon \cite[Theorem~1]{EGO19}. As mentioned above, Theorem~\ref{thm:main} above builds on its Euclidean counterpart \cite[Theorem~0.1]{KS20}. In general, the strategy of proof for Theorem~\ref{thm:main} is to lift this to manifolds. Important ingredients in the proof are the bounded geometry of the Riemannian manifold $M$, a suitable decomposition of unity and a compatible characterization of the Sobolev space. The latter requires the restriction to the range $1<p<\infty$. We believe, however, that an extension to $p=1,\infty$ is possible. The upper bound of Theorem~\ref{thm:main} is achieved by a linear algorithm, which is based on moving least squares, applied locally to sets where the point set is ``dense enough''. In the case $p>q$, this yields the improvement $\gamma<\infty$ compared to previous works supposing a small covering radius. This enables us to show optimality of other point sets, possibly with rather large holes in them, as long as the ``average hole size'' is of optimal order.

\medskip

In particular, we are interested in the quality of independently and identically distributied (i.i.d.) random sampling points which are distributed uniformly (according to the normalized Riemannian volume measure) on $M$.
We think this is a good and common model
for sampling points that cannot be chosen freely.
By 
Theorem~\ref{thm:main}, it only remains to examine the covering radius
and the distortion of random points.
We refer to Cohort \cite{Coh04} as well as Reznikov and Saff \cite{RS16} where this has been done in similar settings.
We obtain the following.

\begin{prop}\label{pro:randomdistortion}
	Let $X_1,X_2,\ldots$ be i.i.d.\ with respect to the normalized Riemannian volume on $M$ and let $\alpha>0$.
Then
\[
\EE \, \norm{\dist_M(\,\cdot\,,\{X_1,\ldots,X_n\})}_{L_{\gamma}(M)}^\alpha
\, \asymp \, 
\begin{cases}
n^{-\alpha/d} & \text{if } 0<\gamma<\infty,\\
(n/ \log n)^{-\alpha/d} & \text{if } \gamma=\infty,
\end{cases}
\]
where the implied constants are independent of $n$.
\end{prop}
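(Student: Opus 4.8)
The plan is to prove the four bounds --- upper and lower, for $0<\gamma<\infty$ and for $\gamma=\infty$ --- separately, using only the elementary geometric facts recalled in Section~\ref{sec:prelim}: there are constants $0<c_0\le C_0$ and $r_0>0$ with $c_0 r^d\le\vol_M(B(x,r))\le C_0 r^d$ for all $x\in M$ and $0<r\le r_0$; the diameter of $M$ is finite; and for every $r\le r_0$ the manifold contains $\asymp r^{-d}$ pairwise disjoint balls of radius $r$ (a maximal $2r$-separated set). Write $P_n:=\{X_1,\dots,X_n\}$ and let $\mu$ be the normalized Riemannian volume; since all three quantities in the claim are $\asymp1$ for bounded $n$, I may assume $n$ large throughout. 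The only probabilistic input is that, for fixed $x$,
\[
\PP\big(\dist_M(x,P_n)>r\big)=\big(1-\mu(B(x,r))\big)^n,
\]
together with the analogous identities for finite unions of disjoint balls.

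For $0<\gamma<\infty$ I would handle the upper bound by setting $\beta:=\max(\alpha,\gamma)$, using the finite-measure embedding $\norm{\cdot}_{L_\gamma(M)}\ls\norm{\cdot}_{L_\beta(M)}$ and then Jensen's inequality (valid since $\alpha\le\beta$) and Tonelli's theorem to reduce everything to the pointwise estimate $\EE[\dist_M(x,P_n)^\beta]\ls n^{-\beta/d}$, uniform in $x$; this last estimate comes from integrating the tail bound $\PP(\dist_M(x,P_n)>r)\le\exp(-cnr^d)$ (valid for all $r\ge0$ by the lower volume bound), since $\int_0^\infty\beta r^{\beta-1}e^{-cnr^d}\,\dd r\asymp n^{-\beta/d}$. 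The matching lower bound is deterministic and needs no probability: for any finite $P\subset M$ with $\#P\le n$, the set $\{x:\dist_M(x,P)<r\}$ is covered by the $\#P$ balls $B(y,r)$, $y\in P$, so has volume at most $C_0 nr^d$; choosing $r\asymp n^{-1/d}$ small enough leaves a set of volume $\gs1$ on which $\dist_M(\,\cdot\,,P)\ge r$, whence $\norm{\dist_M(\,\cdot\,,P)}_{L_\gamma(M)}\gs n^{-1/d}$, and taking $P=P_n$ and the $\alpha$-th power finishes this case.

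For $\gamma=\infty$ the quantity is the covering radius $\rho_n^{*}:=\norm{\dist_M(\,\cdot\,,P_n)}_{L_\infty(M)}\le\mathrm{diam}\,M$. For the upper bound I would fix an $r$-net $z_1,\dots,z_N$ of $M$ (so $N\asymp r^{-d}$); then $\rho_n^{*}>2r$ forces some $B(z_j,r)$ to miss $P_n$, so a union bound gives $\PP(\rho_n^{*}>2r)\le N(1-\mu(B(z_1,r)))^n\le Cr^{-d}e^{-cnr^d}$, and taking $r=r_n$ with $r_n^d\asymp(\log n)/n$ and a sufficiently large implied constant (depending on $\alpha$) makes $\PP(\rho_n^{*}>2r_n)$ smaller than any prescribed negative power of $n$; splitting $\EE(\rho_n^{*})^\alpha=\int_0^\infty\alpha r^{\alpha-1}\PP(\rho_n^{*}>r)\,\dd r$ at $2r_n$ then yields $\EE(\rho_n^{*})^\alpha\ls r_n^\alpha\asymp(\log n/n)^{\alpha/d}$. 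For the lower bound I would instead take $r_n$ with $r_n^d=c(\log n)/n$ for a \emph{small} constant $c$, chosen so that $\mu(B(x,r_n))\le\tfrac14(\log n)/n$ for every $x$, and pick $N\asymp r_n^{-d}\asymp n/\log n$ pairwise disjoint balls $B_j:=B(z_j,r_n)$, $j=1,\dots,N$; with $E_j$ the event $\{B_j\cap P_n=\emptyset\}$ and $Z:=\sum_{j=1}^N\bfone_{E_j}$, the choice of $c$ gives $\PP(E_j)=(1-\mu(B_j))^n\ge(1-\tfrac14(\log n)/n)^n\ge n^{-1/2}$ for $n$ large, hence $\EE Z\gs Nn^{-1/2}\asymp n^{1/2}/\log n\to\infty$, while disjointness of the balls forces the negative correlation $\PP(E_j\cap E_k)=(1-\mu(B_j)-\mu(B_k))^n\le\PP(E_j)\PP(E_k)$ for $j\ne k$, so $\mathrm{Var}(Z)\le\EE Z$ and the second-moment bound $\PP(Z\ge1)\ge(\EE Z)^2/\EE[Z^2]\ge\EE Z/(1+\EE Z)\to1$ applies; on $\{Z\ge1\}$ some $B_j$ misses $P_n$, so $\rho_n^{*}\ge r_n$, giving $\EE(\rho_n^{*})^\alpha\ge r_n^\alpha\,\PP(Z\ge1)\gs(\log n/n)^{\alpha/d}$.

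I expect the covering-radius lower bound to be the main obstacle: one has to keep $\asymp n/\log n$ disjoint test balls so that the expected number of empty ones diverges, yet make their radius small enough that each is empty with probability at least a power of $n^{-1/2}$ --- this trade-off is precisely what forces the $\log n$ --- and then convert the first-moment divergence into the in-probability statement $\PP(\text{some ball empty})\to1$ via the negative correlation coming from disjointness. The other three bounds reduce to routine integrations of exponential tails against the uniform volume estimates from Section~\ref{sec:prelim}; alternatively one could invoke the sharper results of Reznikov--Saff~\cite{RS16} or Cohort~\cite{Coh04}.
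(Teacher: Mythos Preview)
Your proof sketch is correct, but the route is genuinely different from the paper's.  For the lower bound with $\gamma<\infty$ you use the same deterministic volume argument as the paper (its relation~\eqref{eq:opt-dist}).  For the upper bound with $\gamma<\infty$, however, the paper does not argue via tails and Jensen: it proves a full limit theorem (Theorem~\ref{thm:distortion}) giving $L_p$-convergence of $n^{\gamma/d}\int_M\dist_M(x,P_n)^\gamma\,\dvol_M(x)$ to an explicit constant, using moment computations, dominated convergence, and the Lebesgue differentiation theorem, and then applies it with $p=\alpha/\gamma$.  Your Jensen--Tonelli reduction to the pointwise bound $\EE[\dist_M(x,P_n)^\beta]\ls n^{-\beta/d}$ is substantially lighter and perfectly adequate for the $\asymp$ statement; what it gives up is the exact asymptotic constant, which the paper obtains but does not actually need for Proposition~\ref{pro:randomdistortion}.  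For $\gamma=\infty$ the paper simply cites Reznikov--Saff~\cite{RS16} together with Ahlfors regularity (Lemma~\ref{lem:ahlfors}), whereas you supply self-contained arguments: a net plus union bound for the upper bound, and the second-moment method with negatively correlated indicator variables for the lower bound.  Both are standard and correct; your version has the advantage of being self-contained, while the citation gives sharper concentration.  You even anticipate the paper's choices in your closing remark that one ``could invoke the sharper results of Reznikov--Saff~\cite{RS16} or Cohort~\cite{Coh04}'' --- that is precisely what the paper does.
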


This should be compared with the behavior of optimal sampling points, displayed in \eqref{eq:opt-dist}. Thus, on average, the distortion of random points is asymptotically optimal,
while the covering radius of random points exceeds the covering radius of optimal points
by a logarithmic factor.
We therefore obtain from Theorem~\ref{thm:main} that 
i.i.d.\ uniform sampling points
are, on average, asymptotically optimal for $L_q(M)$-approximation on $H^s_p(M)$
if and only if $q<p$.
In the case $q\ge p$, there is a loss of a logarithmic factor
compared to optimally chosen sampling points. For the integration problem we have asymptotic optimality on average for every $1<p<\infty$ which  closes a logarithmic gap left open in Corollary 1 of~\cite{EGO19}. 

\medskip

As a geometric application of our results, we deduce an asymptotic equivalence for the minimal weighted spherical $L_2$-discrepancy. The concept of discrepancy is central to the study of irregularity of distribution and cubature rules, see e.g. the books \cite{DT97,NW08,NW10} for more information. In particular, in \cite[Section 9.3]{NW10} the connection between weighted $L_2$-discrepancy and integration in reproducing kernel Hilbert spaces is discussed. Further, Matou\v{s}ek \cite{Mat98} used weighted $L_2$-discrepancy to obtain lower bounds on the tractability of high-dimensional integration on $[0,1]^d$. 

\medskip

Consider the $d$-dimensional sphere $\mathbb{S}^d=\{x\in\IR^{d+1}\colon  \|x\|_2=1\}$ equipped with a Riemannian metric inducing the usual geodesic distance $\dist_{\mathbb{S}^d}$. Then geodesic balls are given by spherical caps
\[
C(x,t)
:=\{y\in\mathbb{S}^d\colon  x\cdot y \ge t\} \quad\text{with center }x\in\mathbb{S}^d\text{ and height }t\in [-1,1].
\]
Given points $P=\{x_1,\ldots,x_n\}\subset\mathbb{S}^d$ and weights $w_1,\ldots,w_n\in\IR$ attached to them, their weighted spherical $L_2$-discrepancy is given by
\[
D_2(\{(x_j,w_j)\}_{j=1}^n)
:=\Big(\int_{[-1,1]}\int_{\mathbb{S}^d}\Big\vert \sum_{j=1}^n w_j \mathbf{1}_{C(x,t)}(x_j) - \sigma_d(C(x,t))\Big\vert^2 \dd\sigma_d(x)\dd t\Big)^{1/2},
\]
where $\sigma_d$ is the normalized uniform measure on $\mathbb{S}^d$ and $\mathbf{1}_A$ denotes the characteristic function of a set $A$. It returns the average approximation error when one uses the measure $\sum_{j=1}^n w_j \delta_{x_j}$ to approximate the volume of spherical caps. For equal weights $w_j=1/n$ it reduces to spherical $L_2$-discrepancy which is connected to energy optimization via Stolarsky's principle. We refer to Brauchart and Dick~\cite{BD13} for an elegant proof which uses the Hilbert space structure of the Sobolev space $H^{(d+1)/2}_2(\mathbb{S}^d)$.

\medskip

In particular, the representation of the reproducing kernel of $H^{(d+1)/2}_2(\mathbb{S}^d)$
obtained in~\cite{BD13}  implies for any cubature rule $f\mapsto Q_{P,w}(f):=\sum_{j=1}^n w_j f(x_j)$ the identity 
\begin{equation}\label{eq:scd}
e(Q_{P,w},H^{(d+1)/2}_2(\mathbb{S}^d),\mathrm{INT})
\,=\, D_2(\{(x_j,w_j)\}_{j=1}^n),
\end{equation}
if $H^{(d+1)/2}_2(\mathbb{S}^d)$ is equipped with a suitable equivalent norm,
see Section~\ref{sec:scd} for details.
Thus, Theorem~\ref{thm:main} has the following consequence for the minimal 
weighted spherical $L_2$-discrepancy.
\begin{cor}\label{cor:disc}
	For all $d\in\IN$ and $P=\{x_1,\dots,x_n\}\subset \mathbb{S}^d$, we have 
\[
\inf_{w_1,\ldots,w_n}D_2(\{(x_j,w_j)\}_{j=1}^n)
\,\asymp\, \|\dist_{\mathbb{S}^d}(\,\cdot\,,P)\|_{L_{d+1}(\mathbb{S}^d)}^{(d+1)/2}\,,
\]
where the implied constants are independent of $P$.
\end{cor}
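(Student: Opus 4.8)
The plan is to combine the identity~\eqref{eq:scd} with Theorem~\ref{thm:int}, specialized to the sphere $M=\mathbb S^d$ with $p=q=2$ and $s=(d+1)/2$. First I would verify that these parameters are admissible: we need $1<p<\infty$ (clear) and $s>d/p$, i.e.\ $(d+1)/2>d/2$, which holds for every $d\in\IN$. With $p=2$ the Hölder conjugate is $p^*=2$, so $sp^*=(d+1)/2\cdot 2=d+1$ and the exponent $\alpha=s=(d+1)/2$; thus Theorem~\ref{thm:int} reads
\[
e\big(P,H^{(d+1)/2}_2(\mathbb S^d),\mathrm{INT}\big)\,\asymp\,\big\|\dist_{\mathbb S^d}(\,\cdot\,,P)\big\|_{L_{d+1}(\mathbb S^d)}^{(d+1)/2},
\]
with implied constants independent of $P$, which is exactly the right-hand side of the Corollary. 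It remains to identify the left-hand side $e(P,H,\mathrm{INT})$ with $\inf_{w}D_2(\{(x_j,w_j)\}_{j=1}^n)$.

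For this I would recall the structure of the integration problem as described just before the statement: by the Smolyak--Bakhvalov theorem the infimum over all (possibly nonlinear) algorithms $A=\varphi\circ N_P$ equals the infimum over linear algorithms $A=Q_{P,w}$, so
\[
e\big(P,H,\mathrm{INT}\big)\,=\,\inf_{w\in\IR^n}e\big(Q_{P,w},H,\mathrm{INT}\big).
\]
By~\eqref{eq:scd} each term on the right equals $D_2(\{(x_j,w_j)\}_{j=1}^n)$, hence the infimum over $w$ equals $\inf_{w_1,\ldots,w_n}D_2(\{(x_j,w_j)\}_{j=1}^n)$. Chaining the two displays yields the claimed equivalence. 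One subtlety to address is that~\eqref{eq:scd} is stated for the norm on $H$ used in~\cite{BD13}, whereas Theorem~\ref{thm:int} uses the Bessel potential norm $\|(I-\Delta_{\mathbb S^d})^{s/2}f\|_{L_2}$; since both are Hilbert space norms on the same space $H^{(d+1)/2}_2(\mathbb S^d)$ and are equivalent, the worst-case errors differ only by fixed multiplicative constants, which is harmless for an $\asymp$-statement. I would make this precise in Section~\ref{sec:scd} and simply invoke it here. Finally, one should note that for $\mathbb S^d$ a Riemannian metric inducing the standard geodesic distance exists (the round metric), so the hypotheses of Theorem~\ref{thm:int} are genuinely met.

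The main obstacle is not in this deduction, which is essentially bookkeeping, but in the inputs: establishing~\eqref{eq:scd} requires the reproducing-kernel computation of Brauchart and Dick together with the standard RKHS worst-case integration formula, and of course Theorem~\ref{thm:int} itself carries the analytic weight. In particular, as flagged in the disclaimer, the lower bound in Theorem~\ref{thm:int} for $s\notin\IN$ is only fully available through~\cite{KS20}; since here $s=(d+1)/2\in\IN$ exactly when $d$ is odd, the lower bound of the present Corollary inherits this caveat precisely when $d$ is even. I would record this dependence explicitly (cf.\ Remark~\ref{rem:disclaimer}) but otherwise treat Theorem~\ref{thm:int} and~\eqref{eq:scd} as given.
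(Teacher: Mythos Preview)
Your proposal is correct and follows exactly the route the paper takes: specialize Theorem~\ref{thm:int} to $M=\mathbb S^d$ with $p=2$ and $s=(d+1)/2$, then use Smolyak--Bakhvalov together with the identity~\eqref{eq:scd} (and the norm equivalence between the Brauchart--Dick norm and the Bessel potential norm) to rewrite $e(P,H,\mathrm{INT})$ as $\inf_w D_2(\{(x_j,w_j)\})$. Your observation about the caveat for even $d$ matches Remark~\ref{rem:disclaimer} precisely.
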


Together with the lower bound \eqref{eq:opt-dist} for the minimal distortion we recover the known lower bound on spherical $L_2$-discrepancy of any $n$-point set which is of order $n^{-1/2-1/2d}$.
By Proposition~\ref{pro:randomdistortion} random points on average achieve this rate if optimal weights are chosen for each realization. 
\begin{cor}\label{cor:disc-ran}
	Let $X_1,X_2,\ldots$ be i.i.d.\ on $\mathbb{S}^d$ distributed according to $\sigma_d$. Then
\[
\IE\,\inf_{w_1,\ldots,w_n}D_2(\{(X_j,w_j)\}_{j=1}^n)
\,\asymp\, n^{-1/2-1/2d}.
\]
\end{cor}

\medskip

We give a brief overview of the remainder of this article. In Section~\ref{sec:prelim} we introduce notation and concepts underlying our results and their proofs. Section~\ref{sec:proof} contains the proof of Theorem~\ref{thm:main} and in Section~\ref{sec:random} we analyze the distortion of random points. In particular, we prove a limit theorem which is stronger and more general than needed and might be of independent interest.

\smallskip

\section{Preliminaries}
\label{sec:prelim}

\medskip

Let $d\in\IN$ and $M=(M,g)$ be a connected compact Riemannian manifold of dimension $d$, which is equipped with a smooth Riemannian metric $g$. Note that $M$ is smooth, without boundary and closed. In the following we present useful information needed later on which we derive from Chavel \cite{Cha93}, Jost \cite{Jos17} and Triebel \cite[Chapter 7.2]{Tri92}. For the sake of conforming to standard notation in this field, we abuse notation by overloading the symbols $p$ and $x$.

\medskip

If $U\subset M$ is open and $x:U\to\RR^d$ is any smooth chart, 
we can identify the tangent space $T_p M$ at $p\in U$ with $\RR^d$ 
by taking the basis $\{\frac{\partial}{\partial x_i}: i=1,\ldots, d\}$ 
and any tangent vector in $T_p M$ may be written as 
$\xi=\sum_{i=1}^d \xi^i \frac{\partial}{\partial x_i}$, $\xi^i\in \RR$. 
In the coordinates given by $x$ the metric tensor $g$ at $p\in U$ can be written as a positive definite matrix,
whose entries are given by 
the inner product of the tangent vectors 
$\frac{\partial}{\partial x_{i}}$ and $\frac{\partial}{\partial x_{j}}$ 
in the tangent space $T_p M$, where $i,j=1,\ldots,d$.
We denote these entries by $g_{ij}^{x}(z)$, where $z=x(p)$.
Then $g_{ij}^{x}: x(U)\to\R$ is a smooth function. 

\subsection{The exponential map} \label{subsec:exp}
Choosing a basis $\{e_1,\ldots,e_d\}$ of the tangent space $T_p M$ at $p\in M$ which is orthonormal with respect to $g$ we can identify $T_p M$ with $\IR^d$ and, abusing notation, we write $\exp_p$ for the exponential map taking $y\in \IR^d$ to $c_v(1)\in M$, where $v=\sum_{i=1}^{d}y_i e_i$ and $c_{v}$ is the unique geodesic with $c_{v}(0)=p$ and tangent vector $v$.  This map is a diffeomorphism defined on a neighbourhood around $0\in \IR^d$ and the supremum of all $\rho>0$ such that it is a diffeomorphism on the Euclidean ball $B(0,\rho):=\{x\in \RR^d: \norm{x}_{2}<\rho\}$ is called the local injectivity radius $r_p$. The (global) injectivity radius is $\inj(M)=\inf_{p\in M} r_p$, which is positive as $M$ is compact. For $0<\rho< \inj(M)$ 
the set $B_M(p,\rho):=\exp_p(B(0,\rho))$ is a geodesic ball with center $p$ and radius $\rho$. Then $\exp_p^{-1}: B_M(p,\rho)\to B(0,\rho)$ is a chart and the corresponding coordinates are called normal coordinates with center~$p$.

\medskip

The Riemannian metric $g$ induces a metric $\dist_{M}$ on $M$, which then becomes a complete metric space.
We want to record the following basic result, which allows us to locally compare the Riemannian distance with the Euclidean distance. It is well known and can be deduced for example from the proof of \cite[Corollary 1.4.1]{Jos17}.
\begin{lemma}\label{lem:expbilip}
Let $p\in M$ and $0<\rho<\inj(M)$. The map $\exp_p: B(0,\rho)\to B_M(p,\rho)$ is bi-Lipschitz. That is, there are constants $0<c<C$ such that
\[
c\|z_1-z_2\|_{2}\,\le\, \dist_M(\exp_p(z_1),\exp_p(z_2))\,\le\, C \|z_1-z_2\|_{2} \qquad \text{for all } z_1,z_2\in B(0,\rho).
\]
\end{lemma}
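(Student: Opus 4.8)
\textbf{Proof plan for Lemma~\ref{lem:expbilip}.}
The plan is to derive the bi-Lipschitz estimate from the fact that, in normal coordinates centered at $p$, the metric tensor is a smooth, uniformly positive definite matrix field on the compact closure of $B(0,\rho)$. First I would fix $p$ and $0<\rho<\inj(M)$ and work in the chart $\exp_p^{-1}\colon B_M(p,\rho)\to B(0,\rho)$, whose existence and diffeomorphism property were recalled just before the statement. Writing $G(z)=(g_{ij}(z))$ for the matrix of the metric in these coordinates, smoothness of $g$ gives that $z\mapsto G(z)$ is continuous, and positive definiteness at each point together with compactness of $\overline{B(0,\rho)}\subset B(0,\inj(M))$ (here one may need to shrink to a slightly smaller $\rho'$ and then re-include the boundary, or simply note $\rho<\inj(M)$ lets us take the closure) yields uniform constants $0<\lambda\le\Lambda$ with $\lambda\|\xi\|_2^2\le \xi^\top G(z)\xi\le \Lambda\|\xi\|_2^2$ for all $z\in\overline{B(0,\rho)}$ and $\xi\in\IR^d$.

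Next I would compare lengths of curves. For a piecewise $C^1$ curve $\gamma$ in $B_M(p,\rho)$ with coordinate representation $c=\exp_p^{-1}\circ\gamma$ in $B(0,\rho)$, the Riemannian length satisfies
\[
\sqrt{\lambda}\int_0^1\|\dot c(t)\|_2\,\dd t\;\le\;\length{\gamma}=\int_0^1\sqrt{\dot c(t)^\top G(c(t))\dot c(t)}\,\dd t\;\le\;\sqrt{\Lambda}\int_0^1\|\dot c(t)\|_2\,\dd t,
\]
so the Riemannian length and the Euclidean length of the coordinate curve differ by at most the factors $\sqrt\lambda$ and $\sqrt\Lambda$. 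Taking the infimum over curves joining $\exp_p(z_1)$ and $\exp_p(z_2)$ that stay inside the geodesic ball, and using that $\dist_M$ restricted to the convex-in-coordinates ball $B_M(p,\rho)$ is realized (up to the usual approximation) by such curves, gives the two-sided bound with $c=\sqrt\lambda$ (possibly after a further harmless shrinkage to ensure minimizing curves do not leave the ball, using that radial geodesics from $p$ are length-minimizing below the injectivity radius) and $C=\sqrt\Lambda$. The upper bound is in fact immediate without any convexity caveat, since the straight segment $t\mapsto (1-t)z_1+tz_2$ lies in $B(0,\rho)$ and its image is an admissible competitor.

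The main obstacle is the lower bound: one must ensure that distance-minimizing (or near-minimizing) paths between two points of $B_M(p,\rho)$ can be taken to remain inside $B_M(p,\rho)$, so that the coordinate comparison applies. This is where $\rho<\inj(M)$ enters essentially; alternatively, one argues that any path leaving $B_M(p,\rho)$ and returning has Riemannian length at least $2(\rho-\max(\|z_1\|_2,\|z_2\|_2))$ via the already-established lower bound for radial excursions, which for $z_1,z_2$ in a fixed slightly smaller ball exceeds $\sqrt\lambda\|z_1-z_2\|_2$, so such paths are never competitive. Since the statement is asserted to be well known and deducible from the proof of \cite[Corollary 1.4.1]{Jos17}, I would cite that and present the coordinate computation above as the essential content, remarking that the constants depend only on $p$ and $\rho$ (and, by a further compactness argument over $p\in M$, can be chosen uniformly in $p$ if desired).
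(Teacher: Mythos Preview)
The paper does not actually give a proof of this lemma; it merely states that the result is well known and can be deduced from the proof of \cite[Corollary~1.4.1]{Jos17}. Your plan is precisely the standard argument underlying that citation: uniform two-sided eigenvalue bounds for the metric matrix in normal coordinates on a compact set, followed by a length comparison for curves. So there is nothing to compare against, and your proposal is appropriate in content and in spirit.

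One remark on the one genuine subtlety you correctly flag, namely the lower bound and the possibility that a near-minimizing path leaves $B_M(p,\rho)$. Your ``slightly smaller ball'' workaround is the right idea but, as written, it would only yield the estimate on $B(0,\rho')$ for $\rho'<\rho$ rather than on $B(0,\rho)$. The clean fix is the opposite enlargement: since $\rho<\inj(M)$, pick $\rho<\rho'<\inj(M)$ and take the eigenvalue bounds $\lambda,\Lambda$ on the compact set $\overline{B(0,\rho')}$. Any curve joining $\exp_p(z_1)$ and $\exp_p(z_2)$ that leaves $B_M(p,\rho')$ has, already on its portion up to the first exit, Euclidean coordinate length at least $\rho'-\rho$, hence Riemannian length at least $\sqrt{\lambda}(\rho'-\rho)$; since $\|z_1-z_2\|_2<2\rho$, this exceeds $c\|z_1-z_2\|_2$ for $c=\sqrt{\lambda}\,(\rho'-\rho)/(2\rho)$. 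Curves staying inside give the bound with $c=\sqrt{\lambda}$ directly. This avoids any appeal to geodesic convexity (which need not hold for all $\rho<\inj(M)$) and gives the lemma exactly as stated.
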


\subsection{Integration}
Let $U\subset M$ be open and let $x:U\to \IR^d$ be a chart of $M$.
If $z\in x(U)$, we let $g^{x}(z)$ be the determinant of $(g_{ij}^{x}(z))_{i,j=1}^d$.
If $f:M\to\IR$ is supported in $U$ 
and measurable with respect to the Borel $\sigma$-algebra
we define the integral of $f$ by
\begin{equation}\label{def:intsupp}
\int_M f \,\dvol_M:=\int_{x(U)} f(x^{-1}(z))\sqrt{g^{x}(z)}\, \dd z,
\end{equation}
if this quantity exists and is finite. It is independent of the choice of the coordinates~$x$.
We can extend the integral to global functions 
using a partition of unity, which we now describe. 
Since $M$ is compact, there is a finite atlas $\{(U_j,x_j)\}_{j\le J}$ of $M$.
For every such atlas, there exists a subordinate partition of unity $\{(U_j,\psi_j)\}_{j\le J}$ satisfying
\begin{equation}\label{eq:partofone}
	\psi_{j}\in C^{\infty}(M), \quad \mathrm{supp}\, \psi_{j}\subset U_{j}, \quad 0\le \psi_{j}(x) \le 1, \quad \sum_{j=1}^{J}\psi_{j}(x)=1
\end{equation}
for all $j\le J$ and $x\in M$, see, e.g., Tu~\cite[Proposition~13.6]{Tu11}.
Here supp denotes the support of a function. We define the integral of a measurable function $f\colon M\to\IR$ by setting 
\begin{equation}\label{def:int}
\int_M f \,\dvol_M := \sum_{j=1}^J \int_M f\psi_{j}\,\dvol_M 
\end{equation}
if this quantity exists and is finite.
This definition of the integral is independent of the atlas and the partition of unity.
In particular, \eqref{def:intsupp} and \eqref{def:int} yield the same result
if $f$ is supported in $U$.
As usual, the integral over a Borel measurable set $E\subset M$ is defined by
\[
 \int_E f \,\dvol_M := \int_M f\, \mathbf{1}_E \,\dvol_M
\]
and the Riemannian volume of $E$ is defined by 
$\vol_M(E)=\int_M \mathbf{1}_E\,\dvol_M$.
Note that the volume is well-defined and finite for all Borel sets $E\subset M$.

\subsection{Bounded Geometry}

Being compact, $M$ is of bounded geometry in the sense of Triebel \cite{Tri86,Tri92} 
(see also Cheeger, Gromov and Taylor \cite{CGT82} and Shubin~\cite{Shu92}). This means that 
for every $0<\rho<\inj(M)$ and every multiindex $\alpha\in \NN_0^d$
there exist constants $c>0$ and $c_{\alpha}>0$ such that 
\begin{equation}\label{eq:boundedgeometry}
\sqrt{g(z)}\ge c 
\qquad \text{and} \qquad |D^{\alpha}g_{ij}(z)|\le c_{\alpha} \ \text{ for every multiindex }\alpha
\end{equation}
in the normal coordinates of every local chart $(B_M(p,\rho),\exp_p^{-1})$, where $z\in B(0,\rho)$ is arbitrary. From now on, we shall work in normal coordinates only, which justifies the abuse of notation. 
In particular, we find constants 
$c,C>0$ 
such that $c\le \sqrt{g(z)} \le C$ for all $z\in B(0,\rho)$ 
in coordinates of the chart $(B_M(p,\rho),\exp_p^{-1})$. 
This follows from \eqref{eq:boundedgeometry} via $|g_{ij}(z)|\le c_0$ and a determinant bound in terms of entries.  From this one can obtain a useful integral estimate, which we now state.
\begin{lemma}\label{lem:integraltransfer}
There exist constants $c,C>0$ such that for every non-negative function $f$ 
and every $p\in M$ and $0<\delta<\inj(M)/2$
it holds that
\[
c \int_{B(0,\delta)} f(\exp_p(z))\,\dd z
\,\le \int_{B_M(p,\delta)} f(x)\, \dvol_M(x) 
\,\le\, C \int_{B(0,\delta)} f(\exp_p(z))\,\dd z.
\]
\end{lemma}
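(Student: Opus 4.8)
The plan is to reduce the claimed two-sided estimate to the pointwise bound on the volume density $\sqrt{g(z)}$ obtained from bounded geometry. First I would fix any $0<\rho<\inj(M)$, say $\rho=\inj(M)/2$ (or keep $\rho$ generic with $\delta<\rho$), and recall from the discussion following \eqref{eq:boundedgeometry} that there exist universal constants $0<c_0\le C_0$, independent of $p$, with $c_0\le\sqrt{g(z)}\le C_0$ for all $z\in B(0,\rho)$ in the normal coordinates of the chart $(B_M(p,\rho),\exp_p^{-1})$. Since $0<\delta<\inj(M)/2\le\rho$, the geodesic ball $B_M(p,\delta)=\exp_p(B(0,\delta))$ is contained in $U=B_M(p,\rho)$, so the function $f\cdot\bfone_{B_M(p,\delta)}$ is supported in the single chart domain $U$ and we may apply the coordinate formula \eqref{def:intsupp} with the chart $x=\exp_p^{-1}$.

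Next I would simply write out that application: for non-negative $f$,
\[
\int_{B_M(p,\delta)} f(x)\,\dvol_M(x)
= \int_M f\,\bfone_{B_M(p,\delta)}\,\dvol_M
= \int_{B(0,\delta)} f(\exp_p(z))\,\sqrt{g(z)}\,\dd z,
\]
where I have used that $\exp_p^{-1}(B_M(p,\delta))=B(0,\delta)$ and that $\bfone_{B_M(p,\delta)}\circ\exp_p=\bfone_{B(0,\delta)}$ on $B(0,\rho)$. Inserting the pointwise bounds $c_0\le\sqrt{g(z)}\le C_0$ inside the integral (legitimate because the integrand $f(\exp_p(z))$ is non-negative and measurable) immediately yields
\[
c_0\int_{B(0,\delta)} f(\exp_p(z))\,\dd z
\,\le\,\int_{B_M(p,\delta)} f(x)\,\dvol_M(x)
\,\le\, C_0\int_{B(0,\delta)} f(\exp_p(z))\,\dd z,
\]
which is exactly the assertion with $c=c_0$ and $C=C_0$. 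The constants depend only on the choice of $\rho$ (hence on $M$), not on $p$ or $\delta$, because the bounded-geometry estimates \eqref{eq:boundedgeometry} are uniform over all normal charts.

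The only genuinely delicate point, and the one I would state carefully rather than gloss over, is the measurability/compatibility bookkeeping: one must check that $f\cdot\bfone_{B_M(p,\delta)}$ is Borel measurable (so that \eqref{def:intsupp} applies and is independent of the chart), that the pushforward of Lebesgue measure on $B(0,\delta)$ under $\exp_p$ is absolutely continuous with density $\sqrt{g}$ — which is precisely the content of the change-of-variables formula built into \eqref{def:intsupp} — and that no positivity is lost when $f$ is only assumed non-negative and possibly non-integrable (in which case both sides may be $+\infty$ simultaneously, and the inequalities still hold in $[0,\infty]$). Everything else is a direct substitution, so this is the main, and essentially the sole, obstacle.
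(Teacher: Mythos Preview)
Your proposal is correct and follows essentially the same approach as the paper: write the integral over $B_M(p,\delta)$ in normal coordinates via \eqref{def:intsupp}, obtain $\int_{B(0,\delta)} f(\exp_p(z))\sqrt{g(z)}\,\dd z$, and then insert the uniform bounds $c\le\sqrt{g(z)}\le C$ coming from bounded geometry. The only cosmetic difference is that the paper uses the chart of radius $2\delta$ instead of a fixed $\rho$, which is immaterial.
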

\smallskip 
\begin{proof}
We use the chart $\exp_p^{-1}\colon B_M(p,2\delta) \to B(0,2\delta)$ and the definition~\eqref{def:intsupp}
of the integral to compute
\begin{multline*}
 \int_{B_M(z,\delta)} f(x)\, \dvol_M(x)
 = \int_{M} (f\,\mathbf{1}_{B_M(p,\delta)})(x)\, \dvol_M(x) \\
= \int_{B(0,2\delta)} (f\,\mathbf{1}_{B_M(p,\delta)})(\exp_{p}(z))\sqrt{g(z)}\, \dd z
= \int_{B(0,\delta)} f(\exp_{p}(z))\sqrt{g(z)}\, \dd z.
\end{multline*}
Using the estimates $c\le \sqrt{g(z)} \le C$ completes the proof.
\end{proof}

\subsection{Sobolev Spaces}
\label{sec:sobolev}

In this subsection we collect useful information about the Bessel potential (Sobolev) spaces $H^{s}_p(M)$, where $s>0$ and $1<p<\infty$. First we give some details regarding their definition and then present an equivalent definition by means of a partition of unity. This will be important for the proof of Theorem~\ref{thm:main}.
Recall that
\[
	H^{s}_{p}(M)
	=\{f\in L_{p}(M):(I-\Delta_M)^{s/2}f\in L_{p}(M)\},
\]
where $I$ is the identity on $L_{p}(M)$ and $\Delta_M$ is the Laplace-Beltrami operator on $M$. The norm of $f\in H^s_p(M)$ is
\[
\norm{f}_{H^{s}_{p}(M)}=\norm{(I-\Delta_M)^{s/2}f}_{L_{p}(M)}.
\]
We shall always assume that $s>d/p$ such that the functions in $H^s_p(M)$ are continuous and function values are well-defined. In addition to \cite{Tri92} see also Strichartz~\cite{Str83} for details. Alternatively, the spaces can be defined using a partition of unity and the corresponding spaces on $\IR^d$. 
\smallskip

Let $\Delta$ be the usual Laplace operator on $\IR^d$. Then the Bessel potential space on $\IR^d$ is given by
\begin{align*}
	H^{s}_{p}(\IR^d)
	&:=\{f\in L_{p}(\IR^d): (I -\Delta)^{s/2}\in L_{p}(\IR^d)\}\\
	&=\{f\in L_{p}(\IR^d):\mathfrak{F}^{-1}( (1+\|\xi\|_2^2)^{s/2}\mathfrak{F}f)\in L_{p}(\IR^d)\},
\end{align*}
where $I$ is the identity on $L_{p}(\IR^d)$, with $\norm{f}_{H^{s}_{p}(\IR^d)}=\norm{\mathfrak{F}^{-1}( (1+\|\xi\|_2^2)^{s/2}\mathfrak{F}f)}_{L_{p}}$ for $f\in H^s_p(\IR^d)$. Here $\mathfrak{F}$ 	($\mathfrak{F}^{-1}$) denotes the (inverse) Fourier transform. 
\medskip

 In order to give the equivalent definition, we follow Triebel \cite[Section 7]{Tri92}, who considers the more general scale of $F$-spaces, also known as Triebel-Lizorkin spaces. Let $0<\delta<\inj(M)/8$ be small enough and consider a finite covering of $M$ by the geodesic balls $B_j:=B_M(p_j,\delta), j\le J,$ as well as a subordinate partition of unity $\Psi=\{(B_j,\psi_j)\colon j\le J\}$ satisfying \eqref{eq:partofone} with $U_j$ replaced by $B_j$. 

 We can now state the equivalent definition.

 \begin{prop}\label{pro:equivalence}
	Let $M$ and $\Psi=\{(B_j,\psi_j)\colon j\le J\}$ be as above. Further, let $s>0$ and $1<p<\infty$. Then the space of all functions $f\in L_p(M)$, for which 
\[
\|f\|_{H^s_p(M)}^{\Psi}:=\Big(\sum_{j=1}^J \|(f\psi_j) \circ \exp_{p_j}\|_{H^s_p(\IR^d)}^p\Big)^{1/p}
\]
is finite, coincides with $H^s_p(M)$ and $\|\cdot\|_{H^s_p(M)}^{\Psi}$ defines an equivalent norm on it. Moreover, if $\Phi$ is another partition of unity as above, the norms $\|\cdot\|_{W^s_p(M)}^{\Psi}$ and $\|\cdot\|_{W^s_p(M)}^{\Phi}$ are equivalent. Here, the exponential map $\exp_{p_j}$ is as in Section~\ref{subsec:exp}.
\end{prop}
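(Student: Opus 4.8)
The plan is to reduce the statement to the corresponding localization results for Triebel--Lizorkin spaces on manifolds of bounded geometry, which are available in Triebel~\cite[Chapter~7]{Tri92}, together with the classical identification $H^s_p = F^s_{p,2}$ on $\IR^d$ for $1<p<\infty$. First I would recall that $M$, being compact, is a manifold of bounded geometry in the sense of \eqref{eq:boundedgeometry}, so that the general machinery of \cite[Section~7.2]{Tri92} applies: for the scale $F^s_{p,q}(M)$ defined intrinsically (via the Laplace--Beltrami operator, or equivalently via a spectral/functional-calculus definition), Triebel proves that an equivalent norm is obtained by the expression $\bigl(\sum_{j\le J}\|(f\psi_j)\circ\exp_{p_j}\|_{F^s_{p,q}(\IR^d)}^p\bigr)^{1/p}$, with the equivalence constants depending only on the bounded-geometry constants, on $\delta$, and on the chosen partition of unity $\Psi$. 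Specializing to $q=2$ and invoking the Littlewood--Paley characterization $H^s_p(\IR^d)=F^s_{p,2}(\IR^d)$ (valid precisely for $1<p<\infty$, see Triebel~\cite[Theorem~2.3.3]{Tri92}) and the analogous identity $H^s_p(M)=F^s_{p,2}(M)$ on the manifold side, one obtains exactly the claimed equivalence $\|f\|_{H^s_p(M)}\asymp\|f\|^{\Psi}_{H^s_p(M)}$.

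For the second assertion — independence of the partition of unity up to equivalence of norms — the cleanest route is transitivity: if $\|\cdot\|^{\Psi}_{H^s_p(M)}\asymp\|\cdot\|_{H^s_p(M)}$ and $\|\cdot\|^{\Phi}_{H^s_p(M)}\asymp\|\cdot\|_{H^s_p(M)}$ with constants independent of $f$, then $\|\cdot\|^{\Psi}_{H^s_p(M)}\asymp\|\cdot\|^{\Phi}_{H^s_p(M)}$. (Here I would silently correct the apparent typo $W^s_p$ for $H^s_p$ in the statement; the notation $\|\cdot\|^{\Psi}_{W^s_p(M)}$ should read $\|\cdot\|^{\Psi}_{H^s_p(M)}$.) Alternatively, a direct argument works: given two admissible systems $\Psi=\{(B_j,\psi_j)\}$ and $\Phi=\{(\widetilde B_k,\varphi_k)\}$, write $f\psi_j=\sum_k \varphi_k f\psi_j$ and estimate each $\|(\varphi_k f\psi_j)\circ\exp_{p_j}\|_{H^s_p(\IR^d)}$ by transferring to the chart centered at $\widetilde p_k$ via the bi-Lipschitz transition map $\exp_{\widetilde p_k}^{-1}\circ\exp_{p_j}$ (Lemma~\ref{lem:expbilip}), using that the transition maps and all their derivatives are uniformly bounded by bounded geometry, and that multiplication by the fixed smooth compactly supported cutoff $\psi_j\circ\exp_{\widetilde p_k}$ is bounded on $H^s_p(\IR^d)$; then sum over the finitely many indices.

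The main obstacle I anticipate is not any single hard estimate but the bookkeeping of the localization argument: one must verify that pushing a function forward and back through the exponential charts, multiplying by the smooth cutoffs, and summing the finitely many pieces, all stay within $H^s_p(\IR^d)$ with constants controlled uniformly — this requires the diffeomorphism invariance of $H^s_p$ under maps with bounded derivatives of all orders (and bounded Jacobian from below), which is exactly where bounded geometry \eqref{eq:boundedgeometry} and Lemma~\ref{lem:expbilip} enter, and which is standard but slightly delicate for non-integer $s$ because one cannot argue directly with weak derivatives and must instead use, e.g., the lifting property and interpolation, or atomic/wavelet decompositions, or simply cite the corresponding theorem in \cite[Theorem~7.2.5 and Theorem~7.4.5]{Tri92}. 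Since the paper only needs $1<p<\infty$, where $H^s_p=F^s_{p,2}$, I would lean on Triebel's treatment of the $F$-scale on manifolds of bounded geometry and keep the self-contained part of the proof confined to the elementary transitivity/finite-sum reasoning described above.
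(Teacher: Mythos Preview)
Your proposal is correct and takes essentially the same approach as the paper: cite Triebel's localization theorem for the $F$-scale on manifolds of bounded geometry (the paper uses \cite[Theorem~7.2.3]{Tri92}) together with the identification $H^s_p=F^s_{p,2}$ on both $\IR^d$ and $M$ (the paper cites \cite[Theorem~2.5.6]{Tri83} and \cite[Theorem~7.4.5]{Tri92}). Your additional direct argument for independence of the partition of unity and your observation about the $W^s_p$/$H^s_p$ typo are extras the paper does not spell out, but the core strategy is the same.
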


\begin{proof}
This follows from \cite[Theorem 7.2.3]{Tri92} and the fact that the spaces $H^s_p$ are contained in the scale of $F$-spaces, see \cite[Theorem 2.5.6]{Tri83} and \cite[Theorem 7.4.5]{Tri92} for this on $\IR^d$ and $M$, respectively. 
\end{proof}

\begin{rem}
	Actually, one can define the more general scales of Triebel-Lizorkin and Besov spaces on $M$ completely analogously as in Proposition~\ref{pro:equivalence}, and as in \cite{KS20}, our results may be obtained for the Triebel-Lizorkin spaces $F^s_{p\tau}(M)$, where $1<p<\infty$ and $0< \tau\le\infty$.
\end{rem}

It follows from Proposition \ref{pro:equivalence} and the corresponding embedding on $\IR^d$ that $H^s_p(M)$ is continuously embedded into $C(M)$, the space of continuous functions on $M$ whenever $s>d/p$.  If $s\in \IN$, then $H^s_p(\IR^d)$ coincides with the Sobolev space $W^s_p(\IR^d)$ of functions with finite norm
\[
\norm{f}_{W^s_p(\IR^d)}
:= \Big( \sum_{\abs{\alpha}\le s} \norm{D^{\alpha}f}_{L_p(\IR^d)}^p \Big)^{1/p},
\]
where $\alpha\in \IN_0^d$ is a multiindex, $|\alpha|=\alpha_1+\dots+\alpha_d$ and $D^{\alpha}=\frac{\partial^{|\alpha|}}{\partial x_1^{\alpha_1}\cdots \partial x_d^{\alpha_d}}f$ denotes a weak partial derivative of order $|\alpha|$. For this fact note that norms on finite-dimensional spaces are equivalent and see, e.g., \cite[Theorem 2.5.6]{Tri83}. Analogously, one can use covariant derivatives to define a Sobolev space of integer smoothness $s\in\IN$ on a manifold and these coincide again with $H^s_p(M)$, see Theorem~7.4.5 in~\cite{Tri92}.
More about the spaces $H^s_p(M)$ may be found in the paper \cite{BCC+14} by Brandolini et al., where the eigensystem of $\Delta_M$ is employed and also $p=1,\infty$ is allowed. We exclude these boundary values of $p$ because of Proposition~\ref{pro:equivalence}.

\subsection{Spherical cap discrepancy}
\label{sec:scd}

As mentioned in the introduction, see~\eqref{eq:scd},
the minimal worst case error of a point set for the integration
problem on $H := H^{(d+1)/2}_2(\mathbb S^d)$
equals the minimal weighted spherical cap discrepancy of the point set.
For completeness, we give some details on this relation while relying on standard techniques, see, e.g.,~Novak and Wo\'zniakowski \cite[Section 9]{NW10}.
\smallskip

According to \cite[Theorem~1]{BD13},
an equivalent norm on the Hilbert space $H$ is given via the reproducing kernel
\[
 K\colon \mathbb S^d \times \mathbb S^d \to \IR,
 \quad
 K(x,y) \,=\, \int_{[-1,1]} \int_{\mathbb S^d}\, \mathbf{1}_{C(z,t)}(x) \mathbf{1}_{C(z,t)}(y)\,\dd\sigma_d(z)\dd t.
\]
 The representer $h\in H$ of the integral on $H$ is given by
\[
 h(x) \,=\, \langle h, K(x,\cdot) \rangle \,=\, \int_{\mathbb S^d}\, K(x,y)\,\dd\sigma_d(y).
\]
Thus the worst case error of the cubature rule $f\mapsto Q(f):=\sum_{j=1}^n w_j f(x_j)$
can be computed as
\begin{multline*}
 e(Q,H,\mathrm{INT})^2
 \,=\, \sup_{\Vert f \Vert_H\le 1} \left| \int_{\mathbb S^d}\, f(x)\,\dd\sigma_d(x) - \sum_{j=1}^n w_j f(x_j) \right|^2 
 \,=\, \left\Vert h - \sum_{j=1}^n w_j K(x_j,\cdot) \right\Vert^2 \\
 \,=\, \int_{\mathbb S^d}\int_{\mathbb S^d} K(x,y)\,\dd\sigma_d(y)\dd\sigma_d(x) 
 - 2 \int_{\mathbb S^d} \sum_{j=1}^n w_j K(x_j,y)\,\dd\sigma_d(y)
 + \sum_{i,j=1}^n w_i w_j K(x_i,x_j).
\end{multline*}
Inserting the definition of the kernel $K$ and interchanging integrals, we arrive at
\begin{multline*}
 =\,\int_{[-1,1]} \int_{\mathbb S^d}\bigg[ \sigma_d(C(z,t))^2
 - 2 \sum_{j=1}^n w_j \mathbf{1}_{C(z,t)}(x_j)\,\sigma_d(C(z,t)) \\
 \qquad + \sum_{i,j=1}^n w_i w_j  \mathbf{1}_{C(z,t)}(x_i) \mathbf{1}_{C(z,t)}(x_j) 
  \bigg] \dd\sigma_d(z)\dd t
 \,=\, D_2(\{(x_j,w_j)\}_{j=1}^n).
\end{multline*}

\section{The proof of Theorem~\ref{thm:main}}
\label{sec:proof}

Thanks to our preparations, we are able to lift Theorem~\ref{thm:main}
from Euclidean balls to Riemannian manifolds.
More precisely, we use the following proposition from \cite{KS20}.
All parameters are given as in Theorem~\ref{thm:main}.

\begin{prop}[\cite{KS20}]\label{pro:maineuclidean}
Let $\delta>0$ and set $B=B(0,\delta)\subset \IR^d$. 
Then there are positive constants $C$ and $c$ such that the following holds
for all finite and nonempty point sets $P\subset B$.
\begin{enumerate}
 \item There is a non-negative function $f\in H_p^s(\R^d)$ with support in $B$ 
 and $f\vert_P=0$ such that $\Vert f \Vert_{H_p^s(\R^d)} \le 1$ and
 \[
  \Vert f \Vert_{L_q(B)} \,\ge\, c\, \norm{\dist(\cdot,P) }_{L_\gamma(B)}^\alpha.
 \]
 \item There are bounded functions $u_x\colon B\to \R$ for $x\in P$ 
 such that
 for all $f\in H_p^s(\R^d)$ with support in $B$ and $\Vert f \Vert_{H_p^s(\R^d)} \le 1$ we have 
 \[
  \bigg\Vert f - \sum_{x\in P} f(x) u_x  \bigg\Vert_{L_q(B)}
  \,\le\, C\, \norm{ \dist(\cdot,P\cup \partial B) }_{L_\gamma(B)}^\alpha.
 \]
\end{enumerate}
\end{prop}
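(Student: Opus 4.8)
Both parts rest on a Whitney-type decomposition of the open set $B\setminus P$ (for part~(1)) or $B\setminus(P\cup\partial B)$ (for part~(2)) into essentially disjoint cubes $\{Q_i\}$ of side length $r_i\asymp\dist(Q_i,P)$ (resp.\ $\dist(Q_i,P\cup\partial B)$), so that $\dist(\cdot,P)\asymp r_i$ on $Q_i$ and hence $\norm{\dist(\cdot,P)}_{L_\gamma(B)}^\gamma\asymp\sum_i r_i^{\,\gamma+d}$ when $\gamma<\infty$, while for $\gamma=\infty$ the relevant quantity is the covering radius $h:=\max_i r_i$. The bookkeeping linking these decompositions to the exponent $\alpha$ is identical for both directions and I would record it once: a bump of radius $r$, normalized so that $\norm{\cdot}_{H_p^s(\R^d)}\asymp1$, has $L_q$-norm $\asymp r^{\,s-d(1/p-1/q)}$ (the top-order seminorm dominates since $s>d/p$); and for $q<p$, optimizing a sum $\sum_i b_i^q r_i^{E}$, with $E=q(s-d/p)+d$, under the constraint $\sum_i b_i^p\le1$ by H\"older with exponent $p/q>1$ yields $(\sum_i r_i^{\,\gamma+d})^{(p-q)/p}$, using the arithmetic identity $Ep/(p-q)=\gamma+d$. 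Taking the $1/q$-th power turns this into $\norm{\dist(\cdot,P)}_{L_\gamma(B)}^{s}$, matching $\alpha=s$.

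\textbf{Part (1): the lower bound.} For $q\ge p$ (so $\gamma=\infty$, $\alpha=s-d(1/p-1/q)$) a single fooling function suffices: take $x_0\in B$ with $\dist(x_0,P)=h\asymp\norm{\dist(\cdot,P)}_{L_\infty(B)}$, replace $x_0$ by a nearby interior point using convexity of $B$ so that a ball of radius $\asymp h$ around it fits inside $B$ and meets no point of $P$, and place there one non-negative normalized bump; its $L_q$-norm is $\asymp h^{\alpha}$, and it vanishes on $P$ because its support avoids $P$. For $q<p$ I would sum non-negative bumps $\phi_i\ge0$ with pairwise essentially disjoint supports over the Whitney cubes, with amplitudes chosen by the H\"older-optimal rule above; the resulting $f\ge0$ vanishes on $P$, and the strategy computation gives $\norm{f}_{L_q(B)}\gtrsim\norm{\dist(\cdot,P)}_{L_\gamma(B)}^{s}$. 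Two technical points need care: first, one needs $\norm{f}_{H_p^s(\R^d)}^p\lesssim\sum_i\norm{a_i\phi_i}_{H_p^s}^p$, which for non-integer $s$ does not follow from near-disjointness alone and is best obtained from an atomic decomposition of $H_p^s$ (this is precisely the gap flagged in Remark~\ref{rem:disclaimer}); second, cubes abutting $\partial B$ must be trimmed so each bump fits inside $B$, and one checks via convexity and a volume comparison that the discarded boundary layer costs at most a constant factor in $\sum_i r_i^{\,\gamma+d}$.

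\textbf{Part (2): the upper bound.} The operator $Qf=\sum_{x\in P}f(x)u_x$ would be a moving-least-squares quasi-interpolant reproducing all polynomials of degree $<s$. The functions $u_x$ are built locally: around each target point $y$ one solves a weighted least-squares problem over the data in a neighbourhood of radius comparable to the local value of $\dist(y,P\cup\partial B)$, where the constraint $f|_{\partial B}=0$ (legitimate since $f$ is supported in $B$) supplies virtual interpolation data near $\partial B$ — this is exactly what replaces $\dist(\cdot,P)$ by $\dist(\cdot,P\cup\partial B)$. On a Whitney cube $Q_i$ of size $r_i$ one subtracts the best local polynomial $\pi_i$ of degree $<s$ and uses reproduction, writing $f-Qf=(f-\pi_i)-Q(f-\pi_i)$, and combines a Bramble--Hilbert estimate with the scaled Sobolev embedding $H_p^s\hookrightarrow L_q$ to obtain $\norm{f-Qf}_{L_q(Q_i)}\lesssim r_i^{\,s}\,b_i$ with a normalized local quantity $b_i$ satisfying $\sum_i b_i^p\lesssim\norm{f}_{H_p^s}^p\le1$. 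Summing over $i$ through the strategy computation gives $\norm{\dist(\cdot,P\cup\partial B)}_{L_\gamma(B)}^{s}$ for $q<p$, while for $q\ge p$ the same local estimate with the uniform covering radius yields $h^{\alpha}$.

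\textbf{Main obstacle.} The crux is the construction and \emph{uniform} stability of the weights $u_x$ for an arbitrary finite $P$, with no separation hypothesis and no global bound on the fill distance, whereas classical moving-least-squares stability estimates presuppose quasi-uniformity. The remedy is to run the least-squares construction only on regions where $P$ (augmented by $\partial B$) is dense enough to control the local fill distance, and to let the Whitney scale $r_i$ track $\dist(\cdot,P\cup\partial B)$ so that the local Lebesgue constants stay bounded uniformly. Verifying this localized stability, together with — on the lower-bound side — the fractional-$s$ atomic-decomposition estimate for the superposition of bumps, is where the genuine difficulty lies.
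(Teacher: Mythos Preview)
Your sketch is a plausible from-scratch reconstruction of the content of \cite{KS20}, and the ingredients you list (Whitney decomposition of $B\setminus P$, bump superposition with H\"older-optimal amplitudes, moving least squares with polynomial reproduction plus Bramble--Hilbert on each Whitney piece) are indeed what underlies that paper; you also correctly flag the fractional-$s$ subtlety that the authors themselves single out in Remark~\ref{rem:disclaimer}. So as an outline of the \emph{underlying} argument this is essentially right.

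However, the paper's own proof of this proposition is not a from-scratch construction at all, but a short black-box reduction to \cite{KS20}. Part~(1) is simply cited. For part~(2), the paper does not build a moving-least-squares operator with ``virtual'' boundary data as you propose; instead it applies the \cite{KS20} algorithm to the \emph{enlarged} finite point set $P\cup Q$ with $Q\subset\partial B$ finite, obtaining functions $u_x$ for $x\in P\cup Q$ and the bound $\norm{f-\sum_{x\in P\cup Q}f(x)u_x}_{L_q(B)}\le C\,\norm{\dist(\cdot,P\cup Q)}_{L_\gamma(B)}^\alpha$. Since $f$ is supported in $B$, every $f(x)$ with $x\in Q$ vanishes and the sum collapses to $\sum_{x\in P}f(x)u_x$; then one lets $Q$ fill $\partial B$ densely so that $\dist(\cdot,P\cup Q)\to\dist(\cdot,P\cup\partial B)$ uniformly and takes the infimum over $Q$. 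This sidesteps entirely the ``main obstacle'' you identify (uniform stability of moving least squares without quasi-uniformity), because that work was already done in \cite{KS20} for arbitrary finite point sets. Your approach buys self-containment and insight into the mechanism; the paper's approach buys a five-line proof.
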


\smallskip

\begin{proof}
	The first property is proven precisely this way in \cite{KS20} and thus we turn to the second property.
Let $Q\subset \partial B$ be finite.
By Theorems~1 and 2 
in \cite{KS20} there are bounded functions $u_x\colon B\to\R$ for $x\in P\cup Q$ such that 
we have for all $f\in H_p^s(B)$ with $\Vert f \Vert_{H_p^s(B)} \le 1$
that 
\[
 \bigg\Vert f - \sum_{x\in P\cup Q} f(x) u_x  \bigg\Vert_{L_q(B)}
  \le\, C\, \norm{ \dist(\cdot,P\cup Q) }_{L_\gamma(B)}^\alpha
\]
with a constant $C$ independent of $P$ and $Q$ (and $f$). See also \cite[Remark~8]{KS20}.
If $f$ is supported in $B$, this turns into
\[
 \bigg\Vert f - \sum_{x\in P} f(x) u_x  \bigg\Vert_{L_q(B)}
  \le\, C\, \norm{ \dist(\cdot,P\cup Q) }_{L_\gamma(B)}^\alpha
\]
Taking the infimum over all finite point sets $Q\subset \partial B$, we get the statement.
More precisely, we find a sequence of finite point sets $Q_k \subset \partial B$
such that the $1/k$-balls centered at $Q_k$ cover $\partial B$.
Then $\dist(\cdot,P\cup Q_k)$ converges uniformly to $\dist(\cdot,P\cup \partial B)$ on $B$ 
and thus $\norm{ \dist(\cdot,P\cup Q_k) }_{L_\gamma(B)}^\alpha$ converges
to $\norm{ \dist(\cdot,P\cup \partial B) }_{L_\gamma(B)}^\alpha$. 
\end{proof}

\smallskip

In the following, fix a partition of unity $\Psi=\{(B_j,\psi_j) \colon j\le J\}$ 
as in Proposition~\ref{pro:equivalence}, 
where $B_j=B_M(p_j,\delta)$ with $0<\delta<\inj(M)/24$ small enough to accommodate for a factor of three, see the proof of the lower bound below for a justification. 
We use the shorthand $B=B(0,\delta)$ and $\exp_j=\exp_{p_j}$.

\subsection{The upper bound}

We construct our algorithm as follows.  
Let $f\in H^s_p(M)$ with $\|f\|_{H^s_p(M)}\le 1$ and $j\le J$ be arbitrary but fixed for now.
We write 
\[
f_j:=(\psi_j f)\circ \exp_{j}
\]
which
is supported in $B$ and belongs to $H^s_p(\IR^d)$ with 
\[
\|f_j\|_{H^s_p(\IR^d)}\le \|f\|_{H^s_p(M)}^{\Psi}\le C_{\Psi}\|f\|_{H^s_p(M)}\le C_{\Psi},
\]
where $C_{\Psi}$ is independent of $f$. This follows from Proposition~\ref{pro:equivalence}.
We take the point set $P_j:= \exp_{j}^{-1}(P\cap B_j)\subset B$. 
Applying Proposition~\ref{pro:maineuclidean} to the rescaled functions $C_{\Psi}^{-1}f_j$ we get bounded functions $u_x^{(j)}\colon B \to \IR$, $x\in P_j$,
such that we have for all $f\in H^s_p(M)$ that
\begin{equation}\label{eq:localestimate0}
  \Bnorm{ f_j - \sum_{x\in P_j} f_j(x) u_x^{(j)}  }_{L_q(B)}
	\,\le\, C_{\Psi}C\, \norm{ \dist(\cdot,P_j\cup\partial B) }_{L_\gamma(B)}^\alpha.
\end{equation}
Define a function $A_j(f)$ on $B_j$ by
\[
A_j(f) := \Big(\sum_{x\in P_j} f_j(x)\, u_x^{(j)} \Big) \circ \exp_{j}^{-1}
= \sum_{p\in P\cap B_j} (\psi_j f)(p)\, u_p^{(j)}  
\]
and extend the function to $M$ by zero. 
Here we carried out the substitution $p=\exp_{j}(x)$ 
and denote $u_p^{(j)}=u_x^{(j)}\circ\exp_{j}^{-1}$.
Rewriting \eqref{eq:localestimate0} gives
\begin{equation} \label{eq:localestimate}
 \norm{ (\psi_j f - A_j(f))\circ \exp_{j}  \,}_{L_q(B)}
  \,\le\, C\, \norm{ \dist(\cdot,P_j\cup\partial B) }_{L_\gamma(B)}^\alpha.
\end{equation}
We proceed like this for 
every $j\le J$. 
Note that the constant $C$ is independent of $f$, $P$ and $j$. 
We set
\[
A(f) := \sum_{j=1}^J A_j(f).
\]
The recovery operator $A\colon H_p^s(M) \to L_\infty(M)$ is linear and uses only function values at $P$. 
It remains to exhibit an appropriate upper bound. This is now straightforward. The triangle inequality and $\sum_{j=1}^{J}\psi_j\equiv 1$ imply
\begin{align*}
\norm{f - A(f)}_{L_q(M)} 
\le \sum_{j=1}^J\norm{\psi_j f - A_j(f)}_{L_q(M)}.
\end{align*}
We apply the upper estimate of Lemma \ref{lem:integraltransfer} for every $j$ to the non-negative functions $|\psi_j f - A_j(f)|^q,$ supported in $B_j=B_M(z_j,\delta)$, and arrive at
\[
\norm{f - A(f)}_{L_q(M)} \le C^{1/q} \sum_{j=1}^J \norm{ (\psi_j f - A_j(f) )\circ \exp_{j}\,}_{L_q(B)}.
\]

Applying the local estimate \eqref{eq:localestimate} to each term in the sum yields 
\[
\norm{f - A(f)}_{L_q(M)} \lesssim  \sum_{j=1}^J  \big\Vert \dist(\cdot,P_j\cup\partial B) \big\Vert_{L_\gamma(B)}^\alpha.
\]
Let us note that the constants of Lemma~\ref{lem:expbilip} and Lemma~\ref{lem:integraltransfer} are independent of $p$, but we do not need this fact here and in the following, since we use the lemmas only for a constant number of points $p_j$.
Here and in the following, the notation $A \lesssim B$ means that there exists an implicit constant $C>0$, which does not depend on $f$ or $P$, such that $A\le CB$ everywhere. We interpret $A \gtrsim B$ as $B\lesssim A$.

Using the lower estimate of Lemma \ref{lem:integraltransfer} for 
$\dist(\exp_{j}^{-1}(\cdot),P_j\cup\partial B)^{\gamma}$ 
for every $j$ gives
\begin{equation}\label{eq:upper-dist}
\norm{f - A(f)}_{L_q(M)} \,\ls\, 
\sum_{j=1}^J \norm{ \dist(\exp_{j}^{-1}(\cdot),P_j\cup\partial B)}_{L_\gamma(B_j)}^\alpha.
\end{equation}
Note that $\exp_j(P_j\cup\partial B) = (P\cap B_j) \cup \partial B_j$
and Lemma \ref{lem:expbilip} implies that
\[
\dist(\exp_{j}^{-1}(x),P_j\cup\partial B)
\le c^{-1}\dist_M(x,(P\cap B_j) \cup \partial B_j) 
\le c^{-1}\dist_M(x,P)
\] 
for some $c>0$ and all $x\in B_j$.
Combining these inequalities and taking the $L_{\gamma}(B_j)$-norm we derive from \eqref{eq:upper-dist} that
\[
\norm{f - A(f)}_{L_q(M)} \,\ls\, \sum_{j=1}^J  \norm{ \dist_M(\cdot,P)}_{L_\gamma(B_j)}^\alpha \le J \norm{ \dist_M(\cdot,P)}_{L_\gamma(M)}^\alpha.
\]
 This proves the upper bound for Theorem~\ref{thm:main}, where we allow the implicit constants to depend on the chosen partition, and in particular on $J$.
The upper bound in Theorem~\ref{thm:main}(b) is achieved by the cubature rule $Q(f):=\int_M A(f)\,\dvol_M$ which follows from
\[
\Big\vert \int_{M} f\, \dvol_M - \int_M A(f)\, \dvol_M \Big\vert
\,\le\, \|f-A(f)\|_{L_1(M)}.
\]

\subsection{The lower bound}

As in \cite{KS20},
the proof of the lower bound relies on 
the well-known technique of fooling functions.
Namely, we provide a non-negative function $f_*$ 
which belongs to the unit ball of $H_p^s(M)$
and vanishes on the point set $P$.
Then any algorithm $A=\varphi\circ N_P$ based on $P$ cannot distinguish $f_*$ from its negative, i.e., it satisfies $A(f_*)=A(-f_*)$. Thus,
	\begin{equation*}
	\begin{split}
	 e(P,H_p^s(M),& L_q(M)) 
	 \,=\, \inf_{A=\varphi\circ N_P} \sup_{\|f\|_{H^s_p(M)}\le 1} \Vert f-A(f) \Vert_{L_q(M)}\\
	 &\ge\, \inf_{u\in L_q(M)}\, \max\left\{ \Vert - f_* + u \Vert_{L_q(M)}, \Vert f_* + u \Vert_{L_q(M)} \right\}
	  \ge\, \|f_*\|_{L_q(M)},
\end{split}
\end{equation*}
and analogously for the integration problem.  It thus suffices to find $f_*$ with
\begin{equation}\label{eq:fofu}
 \Vert f_*\Vert_{L_q(M)} \,\gtrsim\, \left\|\dist_M(\,\cdot\,, P)\right\|_{L_{\gamma}(M)}^\alpha.
\end{equation}

In order to prepare the proof, we introduce a set of nice partitions of unity, which are well adapted for constructing our fooling function. 
Namely, for each $k\le J$ we choose a partition of unity $\Psi_k$ by removing all elements of the partition $\Psi$ which intersect the ball $B_k$ and adjoin the larger ball $3B_k$ together with the sum of the deleted functions. In this way, we obtain a partition with only one element not equal to the zero function on $B_k$.
More precisely, let $J_k:=\{j\le J: B_j\cap B_k = \emptyset\}$ and 
\[
\Psi_k:= \{(3B_j,\psi_j) \colon j\in J_k\} \cup \{(3B_k,\widetilde{\psi}_k)\} \quad \text{with} \quad \widetilde{\psi}_k=\sum_{j\in J_k^C} \psi_j, 
\]
where $J_k^C=\{j\le J: B_j \cap B_k\neq \emptyset\}$.
It is readily checked that we have a partition of unity as before: Any point which belonged to some $B_j, j\in J_k^C,$ is included in $3B_k$ and $\widetilde{\psi}_k$ is supported in $3B_k$. Moreover, we gain the property $\widetilde{\psi}_k(x)=1$ for $x\in B_k$. The enlargement of the balls $B_j$, $j\in J_k,$ to $3B_j$ is due to technical reasons since only equally sized balls are used in \cite{Tri92}.
As the Sobolev norms $\norm{\cdot}_{H^s_p(M)}^{\Psi_{k_1}}$ and $\norm{\cdot}_{H^s_p(M)}^{\Psi_{k_2}}$ are equivalent for all $k_1,k_2\le J$, we can take suitable constants $c_{\Psi},C_{\Psi}>0$ such that for all $f\in H^s_p(M)$ and all $k_1,k_2\le J$, we have
\[
c_{\Psi}\norm{f}_{H^s_p(M)}^{\Psi_{k_1}}\le \norm{f}_{H^s_p(M)}^{\Psi_{k_2}} \le C_{\Psi}\norm{f}_{H^s_p(M)}^{\Psi_{k_1}}.
\]

We begin to construct our fooling function. If $\gamma<\infty$, then because of
\begin{align*}
\|\dist_M(\cdot, P)\|_{L_{\gamma}(M)}^{\gamma}
&=\sum_{j=1}^J \int_M \psi_j(x) \dist_M(x, P)^{\gamma}\,\dvol_M(x)\\
&\le \sum_{j=1}^J \int_{B_j} \dist_M(x, P)^{\gamma}\,\dvol_M(x),  
\end{align*}
there must exist $i_0\le J$ such that
\begin{equation}\label{eq:badball}
\int_{B_{i_0}} \dist_M(x, P)^{\gamma}\,\dvol_M(x) \,\ge\, \frac{1}{J}\, \|\dist_M(\cdot, P)\|_{L_{\gamma}(M)}^{\gamma}.
\end{equation}
If $\gamma=\infty$, we replace formula \eqref{eq:badball} by
\[
\|\dist_M(\cdot,P)\|_{L_{\infty}(B_{i_0})}
= \|\dist_M(\cdot,P)\|_{L_{\infty}(M)}.
\]
We consider now the partition of unity $\Psi_{i_0}$. Recall that $3B_{i_0}$ belongs to it and $\widetilde{\psi}_{i_0}=1$ on $B_{i_0}$. The exponential maps $\exp_j$ are now considered as mappings from $3B$ to $3B_j$ for $j\in J_k$.

\smallskip

We construct a fooling function on $B$ on which we consider the point set $P_{i_0}=\exp_{i_0}^{-1}(P\cap B_{i_0})$. By Proposition~\ref{pro:maineuclidean} we get a non-negative function $f\in H^s_p(\IR^d)$ with support in $B$, 
\[
\norm{f}_{H^s_p(\IR^d)}\le 1,\quad f|_{P_{i_0}}=0\quad \text{and}\quad\|f\|_{L_q(B)}\gs \|\dist(\cdot, P_{i_0})\|_{L_{\gamma}(B)}^\alpha.
\]
We define the function $f_{\ast}$ by $f_{\ast}(x)=f(\exp^{-1}_{i_0}(x))$ if $x\in B_{i_0}$ and zero else. 

\smallskip

We compute that $\psi_j f_{\ast}\circ \exp_{j}=0$ if $j\in J_{i_0}$, since $\psi_j\equiv 0$ on $B_{i_0}$ for those $j$, and $\widetilde{\psi}_{i_0} f_{\ast} \circ \exp_{i_0}=f$. By definition
\[
\norm{f_{\ast}}_{H^s_p(M)}^{\Psi_{i_0}}
=\Big(\sum_{j\in J_{i_0}} \|\psi_j f_{\ast}\circ \exp_{j}\|_{H^s_p(\RR^d)}^p+\|\widetilde{\psi}_{i_0} f_{\ast}\circ \exp_{i_0}\|_{H^s_p(\RR^d)}^p\Big)^{1/p}
\]
and thus, by Proposition \ref{pro:equivalence},
\[
\norm{f_{\ast}}_{H^s_p(M)}
\,\asymp\,\norm{f_{\ast}}_{H^s_p(M)}^{\Psi_{i_0}}
=\norm{\widetilde{\psi}_{i_0} f_{\ast}\circ \exp_{i_0}}_{H^s_p(\RR^d)}
=\norm{f}_{H^s_p(\RR^d)}
\le 1.
\]
Therefore, we can replace $f_{\ast}$ by a rescaled version such that $\|f_{\ast}\|_{H^s_p(M)}\le 1$. Additionally, we have $f_{\ast}|_P=0$. In the case $q<\infty$, we apply Lemma \ref{lem:integraltransfer} to the support $B_{i_0}$ to get
\[
\|f_*\|_{L_q(M)}
 \,\gs\,   \|f\|_{L_q(B)} 
\,\gs\,  \|\dist(\cdot, P_{i_0})\|_{L_{\gamma}(B)}^{\alpha}.
\]
It is straightforward that 
this relation also holds for $q=\infty$.
It remains to
to prove a lower estimate of the form
\begin{equation}\label{eq:distnormsgoal}
\|\dist(\cdot, P_{i_0})\|_{L_{\gamma}(B)} 
\,\gs\, \norm{\dist_M(\cdot, P)}_{L_{\gamma}(B_{i_0})},
\end{equation}
which by means of \eqref{eq:badball} yields that the function $f_{\ast}$ satisfies
\eqref{eq:fofu} and concludes the proof of the lower bounds of Theorem~\ref{thm:main}.

\smallskip

We start with the right-hand side of \eqref{eq:distnormsgoal}. By Lemma~\ref{lem:integraltransfer} we have for $\gamma<\infty$
\[
\norm{\dist_M(\cdot, P)}_{L_{\gamma}(B_{i_0})}
\,\ls\, \norm{\dist(\exp_{i_0}(\cdot), P)}_{L_{\gamma}(B)}.
\]
For $\gamma=\infty$, we even have equality.
We estimate for $z\in B$ using Lemma~\ref{lem:expbilip}
\[
\dist_M(\exp_{i_0}(z),P)
\le \min_{x\in P\cap B_{i_0}} \dist_M(\exp_{i_0}(z),x)
\,\ls\, \min_{y\in P_{i_0}} \|z-y\|_2
= \dist(z,P_{i_0}).
\]

Applying the $L_{\gamma}(B)$-norm gives
\[
\norm{\dist_M(\exp_{i_0}(\cdot), P)}_{L_{\gamma}(B)}
\,\ls\, \norm{ \dist(\cdot,P_{i_0})}_{L_{\gamma}(B)}, 
\]
and we arrive at \eqref{eq:distnormsgoal}.

\section{Random points}
\label{sec:random}

This section contains the proof of Proposition \ref{pro:randomdistortion} and related results.
That is, we now study the moments of the random quantity $\norm{\dist_M(\,\cdot\,,\{X_1,\ldots,X_n\})}_{L_{\gamma}(M)}$
for identically and uniformly distributed $X_1,X_2,\ldots$ on $M$.
\medskip

For $\gamma=\infty$ the upper and lower bound from Proposition~\ref{pro:randomdistortion} follow from Theorems 2.1 and 2.2 in Reznikov and Saff \cite{RS16} and the Ahlfors-regularity of $M$, see  Lemma~\ref{lem:ahlfors} below.
For $0<\gamma<\infty$ the lower bound follows from relation~\eqref{eq:opt-dist} since random points can be no better than optimal points. We now proceed with the upper bound in the case $0<\gamma<\infty$.
In fact, we prove the following more general and more precise statement using a modification of the arguments in Cohort \cite[Theorem 1]{Coh04}. 

\begin{thm}\label{thm:distortion}
Let $M$ be 
a compact Riemannian manifold of dimension $d\in\NN$
and $\gamma\in (0,\infty)$. 
Let $\nu$ be a Borel probability measure on $M$ 
which is absolutely continuous with respect to $\vol_M$ 
with density $h_{\nu}:M\to [c,\infty)$ for some $c>0$. 
Let $X_1,X_2,\ldots$ be i.i.d. with law $\nu$. 
Then the distortion of the random point set $P_n=\{X_1,\ldots,X_n\}$ satisfies
\[
n^{\gamma/d}\int_M \dist_M(x,P_n)^{\gamma}\,\dvol_M(x)
\ \longrightarrow \ \omega_d^{-\gamma/d}\,\Gamma\Big(1+\frac{\gamma}{d}\Big)\int_M h_{\nu}(x)^{-\gamma/d}\, \dvol_M(x),
\]
as $n\to\infty$ in $L_p(\Omega)$ for all $p \in (0,\infty)$, where $\Omega$ is the underlying probability space. 
\end{thm}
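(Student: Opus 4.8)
The plan is to reduce the statement on the manifold to a collection of local statements on Euclidean balls, where a result in the spirit of Cohort~\cite{Coh04} applies, and then to patch the pieces together. First I would recall the elementary pointwise convergence result: if $Y_1,Y_2,\ldots$ are i.i.d.\ with a density bounded below on a nice set and $x$ is a Lebesgue point of that density, then $n^{1/d}\dist(x,\{Y_1,\ldots,Y_n\})$ converges in distribution to a Weibull-type law whose $\gamma$-th moment equals $\omega_d^{-\gamma/d}\Gamma(1+\gamma/d)\,h(x)^{-\gamma/d}$; this is the heart of the matter and comes from the fact that the probability that no point falls in the geodesic ball $B_M(x,t\,n^{-1/d})$ behaves like $\exp(-\omega_d h(x) t^d)$ as $n\to\infty$, using that $\vol_M(B_M(x,r))\sim\omega_d r^d$ by the bounded-geometry/volume-comparison estimates (Lemma~\ref{lem:integraltransfer} and the normal-coordinate expansion of $\sqrt{g}$). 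Integrating against $\dvol_M$ and interchanging limit and integral — justified by a uniform integrability bound, see below — yields the claimed limit of the expectation $\EE\,n^{\gamma/d}\int_M\dist_M(x,P_n)^\gamma\,\dvol_M(x)$.

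The key technical input is a \emph{uniform} moment bound that does not degenerate as $n\to\infty$: one needs $C>0$, independent of $n$, such that
\[
\sup_n\ \EE\,\Big(n^{\gamma/d}\int_M\dist_M(x,P_n)^\gamma\,\dvol_M(x)\Big)^{q'}\ \le\ C
\]
for some $q'>q$ (indeed for all $q'<\infty$), which gives both the uniform integrability needed above and, by Vitali, the convergence in $L_q$ for every finite $q$. This bound follows from a layer-cake computation: $\EE\,\dist_M(x,P_n)^\gamma=\gamma\int_0^\infty t^{\gamma-1}\PP(\dist_M(x,P_n)>t)\,\dd t$ and $\PP(\dist_M(x,P_n)>t)=(1-\nu(B_M(x,t)))^n\le\exp(-n\nu(B_M(x,t)))\le\exp(-cn\,c'\min(t,\diam M)^d)$, using $h_\nu\ge c$ and the lower volume bound $\vol_M(B_M(x,t))\gtrsim t^d$ valid up to the diameter; this produces $\EE\,\dist_M(x,P_n)^\gamma\lesssim n^{-\gamma/d}$ uniformly in $x$, and the higher moments of the (bounded) random variable $n^{\gamma/d}\int_M\dist_M(x,P_n)^\gamma\,\dvol_M(x)$ are then controlled by the same kind of estimate together with Minkowski's integral inequality in $L^{q'}(\Omega)$, writing the integrand as an $L^{q'}(\dvol_M\otimes\PP)$ norm and bounding $\|\dist_M(x,P_n)^\gamma\|_{L^{q'}(\PP)}$ pointwise by $n^{-\gamma/d}$ up to a constant.

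The remaining steps are routine. I would localize by covering $M$ with finitely many normal-coordinate balls $B_M(p_j,\delta)$ and a subordinate partition of unity, transfer each piece to $B(0,\delta)\subset\IR^d$ via $\exp_{p_j}$ (the density transforms by the smooth, bounded factor $\sqrt{g}$, which is bounded above and below by bounded geometry, so the transported measure still has density bounded below), and apply the Euclidean pointwise convergence on each chart; the bi-Lipschitz property of $\exp_{p_j}$ in Lemma~\ref{lem:expbilip} ensures that geodesic and Euclidean distances are comparable on each ball, and — crucially for the pointwise limit — that the \emph{local} volume density governing the nearest-neighbour asymptotics at a point $x$ is exactly $h_\nu(x)$ (the first-order term; the metric distortion contributes only at order $o(1)$ as the relevant radii shrink like $n^{-1/d}$). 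Finally, pointwise a.e.\ convergence plus the uniform-integrability/$L^{q'}$ bound give convergence in $L_p$ for all $p\in(0,\infty)$ via Vitali's convergence theorem. The main obstacle I expect is getting the $n$-uniform higher-moment bound cleanly — in particular handling the boundary effects where $B_M(x,t)$ is not contained in a single chart and where the volume-comparison $\vol_M(B_M(x,t))\asymp t^d$ must be replaced by $\vol_M(B_M(x,t))\gtrsim\min(t,\mathrm{const})^d$ for large $t$ — but since only a lower bound on this volume is needed for the moment estimate, and since $M$ is Ahlfors-regular (Lemma~\ref{lem:ahlfors}), this is manageable.
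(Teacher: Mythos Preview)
Your plan has a genuine gap at the step where you invoke Vitali's theorem. What you have actually outlined gives (i) for almost every $x\in M$ the convergence $\EE\big[n^{\gamma/d}\dist_M(x,P_n)^\gamma\big]\to\omega_d^{-\gamma/d}\Gamma(1+\gamma/d)h_\nu(x)^{-\gamma/d}$, hence after integrating $\EE Z_n\to z$; and (ii) uniform bounds $\sup_n\EE Z_n^{q'}<\infty$ via Minkowski's integral inequality and Ahlfors regularity. But Vitali requires $Z_n\to z$ \emph{in probability}, and neither (i) nor (ii) provides that: convergence in distribution of $n^{1/d}\dist_M(x,P_n)$ for each fixed $x$ says nothing about almost-sure or in-probability convergence of the \emph{random} integral $Z_n$. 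Knowing only $\EE Z_n\to z$ and $\sup_n\EE Z_n^{q'}<\infty$ is consistent, for instance, with $Z_n$ oscillating between $0$ and $2z$ with probability $1/2$ each. To close the gap you would need at least $\mathrm{Var}(Z_n)\to 0$, and this is exactly the nontrivial second-moment computation you have not sketched: one must control the correlation between $\dist_M(x_1,P_n)$ and $\dist_M(x_2,P_n)$ for $x_1\ne x_2$, which decouples only because the relevant balls $B_M(x_j,v_j^{1/\gamma}n^{-1/d})$ become disjoint for large~$n$.

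The paper's proof addresses precisely this by computing \emph{all} integer moments $\EE Z_n^p$ directly. Writing $Z_n^p$ as a $p$-fold integral over $M^p$, using a layer-cake identity, and observing that $\prod_j\mathbf{1}\{n^{\gamma/d}\dist_M(x_j,P_n)^\gamma\ge v_j\}$ is the indicator that no $X_i$ falls in $\bigcup_j B_M(x_j,v_j^{1/\gamma}n^{-1/d})$, one gets a closed expression whose pointwise limit (by disjointness of the balls, the volume asymptotics of Lemma~\ref{lem:volumeofballs}, and Lebesgue differentiation) and integrable majorant (via Lemma~\ref{lem:ahlfors}) are explicit. Dominated convergence then yields $\EE Z_n^p\to z^p$ for every $p\in\NN$, whence $\EE|Z_n-z|^p\to 0$ for even $p$ by the binomial formula, and for all $p>0$ by Jensen. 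Note also that no localization via a partition of unity is needed; the paper works intrinsically on $M$, so your chart-by-chart reduction is an unnecessary detour.
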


To the best of our knowledge, there are no exact asymptotic results for quantization using random points on Riemannian manifolds so far and we believe that it could be worthwhile to extend other results on random quantization to manifolds. See the book \cite{GL00} by Graf and Luschgy for more information on the Euclidean setting.

\smallskip

The proof of Proposition~\ref{pro:randomdistortion} is complete if we apply Theorem~\ref{thm:distortion} for $p=\alpha/\gamma$. Thus, it remains to prove Theorem \ref{thm:distortion}. The only essential thing we shall use is the following lemma, a proof of which can be found for example in Bl\"umlinger \cite[Lemma 2]{Blu90}.

\begin{lemma}\label{lem:volumeofballs}
Let $M$ be a compact $d$-dimensional Riemannian manifold. Then 
\[
\lim_{r\to 0} \frac{\vol_M(B_M(p,r))}{\vol(B(0,r))}=1
\]
uniformly in $p\in M$, where $\vol$ denotes the $d$-dimensional Lebesgue measure.
\end{lemma}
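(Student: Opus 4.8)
The plan is to pass to normal coordinates centered at $p$ and reduce the volume ratio to an average of the Riemannian volume density over a Euclidean ball. Fix $\rho$ with $0<\rho<\inj(M)$ and take $r<\rho$. Using the chart $\exp_p^{-1}\colon B_M(p,\rho)\to B(0,\rho)$ and the integration formula \eqref{def:intsupp} applied to $f=\mathbf{1}_{B_M(p,r)}$, I would write
\[
\vol_M(B_M(p,r))=\int_{B(0,r)}\sqrt{g^{(p)}(z)}\,\dd z,
\]
where $\sqrt{g^{(p)}(z)}$ denotes the square root of the determinant of the metric tensor expressed in the normal coordinates centered at $p$. Since $\vol(B(0,r))=\int_{B(0,r)}1\,\dd z$, the ratio in question is exactly the average
\[
\frac{\vol_M(B_M(p,r))}{\vol(B(0,r))}=\frac{1}{\vol(B(0,r))}\int_{B(0,r)}\sqrt{g^{(p)}(z)}\,\dd z,
\]
so it suffices to show that $\sqrt{g^{(p)}(z)}\to 1$ as $z\to 0$, with a rate that is uniform in $p$.

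The argument then rests on two facts about normal coordinates. First, at the center the metric tensor is the identity: since $\exp_p$ is built from a $g$-orthonormal basis of $T_pM$ and its differential at $0$ is the identity map, one has $g^{(p)}_{ij}(0)=\delta_{ij}$, and hence $\sqrt{g^{(p)}(0)}=1$. Second, I would invoke bounded geometry \eqref{eq:boundedgeometry}, which supplies constants $c_\alpha$, \emph{independent of $p$}, bounding $|D^\alpha g^{(p)}_{ij}|$ on $B(0,\rho)$, together with the uniform lower bound $\sqrt{g^{(p)}}\ge c>0$. Because the determinant is a polynomial in the entries $g^{(p)}_{ij}$ and $t\mapsto\sqrt{t}$ is smooth on $[c^2,\infty)$, these bounds yield a gradient estimate $\|\nabla_z\sqrt{g^{(p)}(z)}\|_2\le L$ on $B(0,\rho)$ with $L$ independent of $p$. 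Combined with $\sqrt{g^{(p)}(0)}=1$, this gives the uniform Lipschitz bound
\[
\big|\sqrt{g^{(p)}(z)}-1\big|\le L\,\|z\|_2\le Lr\qquad\text{for all }z\in B(0,r).
\]

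Inserting this into the averaged expression, I would conclude
\[
\left|\frac{\vol_M(B_M(p,r))}{\vol(B(0,r))}-1\right|
\le\frac{1}{\vol(B(0,r))}\int_{B(0,r)}\big|\sqrt{g^{(p)}(z)}-1\big|\,\dd z
\le Lr,
\]
which tends to $0$ as $r\to 0$, uniformly in $p\in M$, as claimed. The only genuine obstacle is securing the uniformity of the constant $L$ over all centers $p$, and this is precisely what the bounded geometry property \eqref{eq:boundedgeometry} of the compact manifold $M$ guarantees; everything else is elementary. I note in passing that one can do better: in normal coordinates the first derivatives of $g^{(p)}_{ij}$ vanish at the origin, so in fact $|\sqrt{g^{(p)}(z)}-1|\ls\|z\|_2^2$ and the ratio approaches $1$ at rate $O(r^2)$, but the linear bound above already suffices for the statement.
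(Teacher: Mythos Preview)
Your proof is correct and self-contained. The paper itself does not give a proof of this lemma but simply cites Bl\"umlinger~\cite[Lemma 2]{Blu90}; you have supplied a clean argument using only ingredients already present in the paper (normal coordinates, the integration formula~\eqref{def:intsupp}, and the bounded geometry bounds~\eqref{eq:boundedgeometry}), so in that sense your write-up is a genuine addition rather than a reproduction of the paper's own reasoning.
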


In other words, letting $\omega_d$ be the $d$-dimensional volume of the unit ball of $\IR^d$ we have $\lim_{r\to 0} \vol_M(B_M(p,r))r^{-d}=\omega_d$ uniformly in $p\in M$.  As a consequence of Lemma~\ref{lem:volumeofballs}, one can obtain the well-known fact that the volume and the metric on $M$ satisfy a compatibility condition which is also known as Ahlfors-regularity.
\begin{lemma}
	\label{lem:ahlfors}
Let $M$ be a compact $d$-dimensional Riemannian manifold. Then there are constants $c,C>0$ such that 
\[
	c\, r^d
	\le \vol_M(B_M(p,r)) 
	\le C\, r^d\quad\text{for all }0<r<\mathrm{diam}(M)\text{ and all }p\in M.
\]
\end{lemma}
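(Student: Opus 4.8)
The plan is to derive Ahlfors regularity from the uniform local volume asymptotics in Lemma~\ref{lem:volumeofballs}, handling small radii by this asymptotic and large radii by a compactness argument. First I would fix a radius $r_0>0$, to be chosen small, and invoke Lemma~\ref{lem:volumeofballs}: there is $r_0<\inj(M)$ such that $\tfrac12\,\omega_d\le \vol_M(B_M(p,r))\,r^{-d}\le 2\,\omega_d$ for all $p\in M$ and all $0<r\le r_0$, uniformly in $p$. This immediately gives the claimed two-sided bound with $c=\tfrac12\omega_d$ and $C=2\omega_d$ on the range $0<r\le r_0$.

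For the remaining range $r_0<r<\operatorname{diam}(M)$ I would argue as follows. The upper bound is trivial: $\vol_M(B_M(p,r))\le \vol_M(M)=:V<\infty$ since $M$ is compact, and $V\le (V/r_0^d)\,r^d$ for $r\ge r_0$, so the upper inequality holds with $C=\max\{2\omega_d,\,V/r_0^d\}$. For the lower bound on this range, the key point is that $\vol_M(B_M(p,r))\ge \vol_M(B_M(p,r_0))\ge \tfrac12\omega_d r_0^d$ for every $p\in M$ by the small-radius estimate; since $r<\operatorname{diam}(M)$ we have $r^d<\operatorname{diam}(M)^d$, hence $\vol_M(B_M(p,r))\ge \tfrac12\omega_d r_0^d \ge \big(\tfrac12\omega_d r_0^d/\operatorname{diam}(M)^d\big)\,r^d$. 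Taking $c=\min\{\tfrac12\omega_d,\ \tfrac12\omega_d r_0^d/\operatorname{diam}(M)^d\}$ gives the lower bound on all of $0<r<\operatorname{diam}(M)$.

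Combining the two ranges yields constants $c,C>0$, depending only on $M$ (through $\omega_d$, $\inj(M)$, $\vol_M(M)$ and $\operatorname{diam}(M)$, and the threshold $r_0$ extracted from Lemma~\ref{lem:volumeofballs}), such that $c\,r^d\le \vol_M(B_M(p,r))\le C\,r^d$ for all $0<r<\operatorname{diam}(M)$ and all $p\in M$. There is essentially no obstacle here; the only thing to be careful about is that the lower bound for large $r$ genuinely uses compactness (finiteness of $\operatorname{diam}(M)$ and connectedness, so that balls of radius $r_0$ are nonempty with volume bounded below) together with the uniformity in $p$ provided by Lemma~\ref{lem:volumeofballs}, rather than any further geometric input. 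One could also note that $\vol_M$ restricted to the relevant scales is comparable to Lebesgue measure in normal coordinates via Lemma~\ref{lem:integraltransfer} and Lemma~\ref{lem:expbilip}, which gives an alternative route to the small-radius estimate, but the argument via Lemma~\ref{lem:volumeofballs} is cleanest.
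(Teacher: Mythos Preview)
Your proposal is correct and follows exactly the route the paper indicates: the paper does not write out a proof but simply states that Lemma~\ref{lem:ahlfors} is a consequence of Lemma~\ref{lem:volumeofballs}, and your argument supplies precisely those details (uniform asymptotics for small radii, trivial upper bound and monotonicity-plus-finite-diameter lower bound for large radii). One minor quibble: the parenthetical remark about connectedness is unnecessary, since balls around $p$ always contain $p$ and the positive lower volume bound for $B_M(p,r_0)$ comes directly from the small-radius estimate, not from connectedness.
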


We are now ready to prove the limit theorem for the distortion of i.i.d.\ random points on a manifold. 

\begin{proof}[Proof of Theorem~\ref{thm:distortion}]
To ease the notation in this proof, we denote the geodesic distance by $\dist$ and the volume integral on $M$ by $\dd x$. We set 
\begin{align*}
Z_n \,&:=\, n^{\gamma/d} \int_M \dist(x,P_n)^{\gamma}\,\dd x, \\
z \,&:=\, \omega_d^{-\gamma/d}\,\Gamma\Big(1+\frac{\gamma}{d}\Big)\int_M h_{\nu}(x)^{-\gamma/d} \dd x,
\end{align*}
and show that 
\begin{equation}\label{eq:moments}
 \IE\, Z_n^p \to z^p \quad \text{for all } \ p\in\IN.
\end{equation}
From this it follows that, for any even integer $p$,
\[
 \IE\, \vert Z_n - z \vert^p \,=\, \sum_{k=0}^p \binom{p}{k} (-z)^{p-k}\, \IE\,Z_n^k
 \ \xrightarrow{n\to\infty}\ \sum_{k=0}^p \binom{p}{k} (-z)^{p-k} z^k \,=\, 0,
\]
i.e., that $Z_n$ converges to $z$ in $L_p(\Omega)$ for all even integers $p$.
By Jensen's inequality, the convergence holds for all $p\in(0,\infty)$, which completes the proof.

\smallskip

It remains to show \eqref{eq:moments}. Let $p\in \IN$. We have
\[
\EE\, Z_n^p 
\,=\, \EE \int_{(M)^p} \prod_{j=1}^p n^{\gamma/d}
 \dist(x_j,P_n)^{\gamma}\,
\dd x_1\cdots\dd x_p.
\]
Interchanging the integrals yields
\[
\EE\, Z_n^p 
\,=\, 
\int_{(M)^p} \EE\prod_{j=1}^p
n^{\gamma/d}\dist(x_j,P_n)^{\gamma}\,
\dd x_1\cdots\dd x_p.
\]
We use $a_j=\int_{\RR_+} \mathbf{1}_{[v_j,\infty)}(a_j)\,\dd v_j$
for $a_j=n^{\gamma/d}\dist(x_j,P_n)^{\gamma}$ 
and put
\[
 {\rm I}_n(v_1,\ldots,v_p,x_1,\ldots,x_p)
 \,:=\,\EE \, \prod_{j=1}^p \bfone_{[v_j,\infty)}(n^{\gamma/d}\dist(x_j,P_n)^{\gamma})
\]
to see after an interchange of integrals that
\begin{equation*}
\EE\, Z_n^p 
\,=\, 
\int_{(M)^p}\int_{(\RR_+)^p} {\rm I}_n(v_1,\ldots,v_p,x_1,\ldots,x_p)\, \dd v_1\cdots\dd v_p\, \dd x_1\cdots\dd x_p.
\end{equation*}
Since
\begin{equation*}
z^p \,=\,\int_{(M)^p}\int_{(\RR_+)^p} \exp\Big(-\omega_d\sum_{j=1}^p v_j^{d/\gamma}h_{\nu}(x_j)\Big)\, \dd v_1\cdots\dd v_p\, \dd x_1\cdots\dd x_p,
\end{equation*}
the desired statement $\EE\, Z_n^p\to z^p$ follows from the dominated convergence theorem
once we show that
\begin{equation}\label{eq:asconv}
 {\rm I}_n(v_1,\ldots,v_p,x_1,\ldots,x_p)\,\xrightarrow{n\to\infty}\, \exp\Big(-\omega_d\sum_{j=1}^p v_j^{d/\gamma}h_{\nu}(x_j)\Big)
\end{equation}
for almost all $v_1,\ldots,v_p,x_1,\ldots,x_p$
and find an integrable majorant of $I_n$.  We observe that
\begin{multline*}
{\rm I}_n(v_1,\ldots,v_p,x_1,\ldots,x_p)
=\, \PP\left[n^{\gamma/d} \dist_M(x_j,P_n)^{\gamma}\ge v_j \text{ for all }j\right] \\
\,=\, \PP\left[X_i\not\in B_M(x_j, v_j^{1/\gamma}n^{-1/d}) \text{ for all }i,j\right] 
=\, \bigg(1-\nu\Big(\bigcup_{j=1}^p B_M\big(x_j,v_j^{1/\gamma}n^{-1/d}\big)\Big)\bigg)^n.
\end{multline*}

In order to find an integrable majorant of $I_n$, we note that $I_n=0$
whenever there is some $j\le p$ with $v_j^{1/\gamma}n^{-1/d} \ge \mathrm{diam}(M)$.
If there is no such $j$, 
then we use Lemma~\ref{lem:ahlfors} and the lower bound on the density of $\nu$ to find a constant $c_1>0$ such that $\nu(B_M(x,r)) \ge c_1 r^d$ for all $x\in M$ and $0<r<\mathrm{diam}(M)$.
Therefore,
\begin{multline*}
{\rm I}_n(v_1,\ldots,v_p,x_1,\ldots,x_p)
\,\le\, \bigg(1- \max_{j\le p}\, \nu\big(B_M(x_j,v_j^{1/\gamma}n^{-1/d})\big)\bigg)^n \\
\le\, \bigg(1- \frac{c_1 \max_{j\le p}\,v_j^{d/\gamma}}{n} \bigg)^n
\,\le\,  \exp\left( - c_1 \max_{j\le p}\, v_j^{d/\gamma} \right)
\,\le\,  \exp\bigg( - \frac{c_1}{p} \sum_{j=1}^p\, v_j^{d/\gamma} \bigg)
\end{multline*}
which gives an integrable majorant of $I_n$.

\smallskip

For showing the almost sure convergence \eqref{eq:asconv} (and thus completing the proof of \eqref{eq:moments}), we use that for
almost every choice of the $x_j$ and $v_j$ 
we have for $n$ large enough that the balls 
$ B_M(x_j,v_j^{1/\gamma}n^{-1/d})$ are disjoint 
and thus
\[
{\rm I}_n(v_1,\ldots,v_p,x_1,\ldots,x_p) \,=\, \bigg(1-\sum_{j=1}^p \nu\big(B_M(x_j,v_j^{1/\gamma}n^{-1/d})\big)\bigg)^n.
\]
Define $a_n:=\sum_{j=1}^p \nu(B_M(x_j,v_j^{1/\gamma}n^{-1/d}))$ for which we want to show that $(1-a_n)^n \to {\rm e}^{-a}$ for $a:=\omega_d\sum_{j=1}^p v_j^{d/\gamma}h_{\nu}(x_j)$. It is sufficient to show $n a_n\to a$.

\smallskip

To show $na_n\to a$, write, for $1\le j\le p$,
\[
n\nu(B_M(x_j,v_j^{1/\gamma}n^{-1/d}))=\frac{n\vol_M(B_M(x_j,v_j^{1/\gamma}n^{-1/d}))}{\vol_M(B_M(x_j,v_j^{1/\gamma}n^{-1/d}))}\int_{B_M(x_j,v_j^{1/\gamma}n^{-1/d})} h_{\nu}(x)\,\dd x,
\]
where $h_{\nu}\in L_1(M)$ is the density of $\nu$. By Lemma~\ref{lem:volumeofballs} the numerator satisfies
\[
n\vol_M(B_M(x_j,v_j^{1/\gamma}n^{-1/d}))\,\to\, \omega_d v_j^{d/\gamma} 
\quad \text{as } \ n\to\infty,
\]
and by the differentiation theorem of Lebesgue we have
\[
\frac{1}{\vol_M(B_M(x_j,v_j^{1/\gamma}n^{-1/d}))}\int_{B_M(x_j,v_j^{1/\gamma}n^{-1/d})} h_{\nu}(x)\,\dd x 
\,\to\, h_{\nu}(x_j)
\]
for almost all $x_j$ and $v_j$.
Therefore, 
\[
n\nu(B_M(x_j,v_j^{1/\gamma}n^{-1/d}))\to \omega_d v_j^{d/\gamma} h_{\nu}(x_j)
\]
and the desired convergence $n a_n\to a$ follows. This completes the proof of \eqref{eq:asconv} and thus of Theorem~\ref{thm:distortion}.
\end{proof}

\medskip

\begin{rem}
	In the case of $\gamma<\infty$ and the sphere $\mathbb{S}^{d}$ one can prove the upper bound of Proposition~\ref{pro:randomdistortion} with the help of a covering by holes in the random point set and Theorem~2.2 in \cite{BRS+18} due to Brauchart et al.\ which states that the average hole size behaves well.
\end{rem}

\bigskip

\subsection*{Acknowledgement}
We thank Aicke Hinrichs for initiating the collaboration 
of the two authors on this project and both Dmitriy Bilyk and Martin Ehler for interesting discussions on the topic. 
Both authors are supported by the Austrian Science Fund (FWF) Project F5513-N26, which is a part of the Special Research Program \emph{Quasi-Monte Carlo Methods:~Theory and Applications}. This research was funded in whole, or in part, by the Austrian Science Fund (FWF), Project P34808. For the purpose of open access, the authors have applied a CC BY public copyright license to any Author Accepted Manuscript arising from this submission.
\bigskip

\bibliographystyle{plain}
\bibliography{2020-RIS}

\begin{thebibliography}{10}

\bibitem{Blu90}
M.~Bl{\"{u}}mlinger.
\newblock {Asymptotic distribution and weak convergence on compact Riemannian
  manifolds}.
\newblock {\em Monatsh. Math.}, 110(3-4):177--188, 1990.

\bibitem{BCC+14}
L.~Brandolini, C.~Choirat, L.~Colzani, G.~Gigante, R.~Seri, and G.~Travaglini.
\newblock Quadrature rules and distribution of points on manifolds.
\newblock {\em Ann. Sc. Norm. Super. Pisa Cl. Sci. (5)}, 13(4):889--923, 2014.

\bibitem{BD13}
J.~S. Brauchart and J.~Dick.
\newblock A characterization of {S}obolev spaces on the sphere and an extension
  of {S}tolarsky's invariance principle to arbitrary smoothness.
\newblock {\em Constr. Approx.}, 38(3):397--445, 2013.

\bibitem{BRS+18}
J.~S. Brauchart, A.~B. Reznikov, E.~B. Saff, I.~H. Sloan, Y.~G. Wang, and R.~S.
  Womersley.
\newblock Random point sets on the sphere---hole radii, covering, and
  separation.
\newblock {\em Exp. Math.}, 27(1):62--81, 2018.

\bibitem{Cha93}
I.~Chavel.
\newblock {\em Riemannian geometry---a modern introduction}.
\newblock Cambridge University Press, Cambridge, 1993.

\bibitem{CGT82}
J.~Cheeger, M.~Gromov, and M.~Taylor.
\newblock Finite propagation speed, kernel estimates for functions of the
  {L}aplace operator, and the geometry of complete {R}iemannian manifolds.
\newblock {\em J. Differential Geometry}, 17(1):15--53, 1982.

\bibitem{Coh04}
P.~Cohort.
\newblock Limit theorems for random normalized distortion.
\newblock {\em Ann. Appl. Probab.}, 14(1):118--143, 2004.

\bibitem{DT97}
M.~Drmota and R.~F. Tichy.
\newblock {\em Sequences, discrepancies and applications}.
\newblock Springer-Verlag, Berlin, 1997.

\bibitem{EGO19}
M.~Ehler, M.~Gr{\"{a}}f, and {C.\,J.} Oates.
\newblock {Optimal Monte Carlo integration on closed manifolds}.
\newblock {\em Stat. Comput.}, 29:1203--1214, 2019.

\bibitem{FM11}
F.~Filbir and H.~N. Mhaskar.
\newblock Marcinkiewicz-{Z}ygmund measures on manifolds.
\newblock {\em J. Complexity}, 27(6):568--596, 2011.

\bibitem{GL00}
S.~Graf and H.~Luschgy.
\newblock {\em Foundations of quantization for probability distributions}.
\newblock Springer-Verlag, Berlin, 2000.

\bibitem{Gro20}
K.~Gr{\"{o}}chenig.
\newblock Sampling, {M}arcinkiewicz-{Z}ygmund inequalities, approximation, and
  quadrature rules.
\newblock {\em J. Approx. Theory}, 257:105455, 20, 2020.

\bibitem{HM04}
S.~Hubbert and T.~M. Morton.
\newblock Lp-error estimates for radial basis function interpolation on the
  sphere.
\newblock {\em J. Approx. Theory}, 129(1):58--77, 2004.

\bibitem{Iac16}
M.~Iacobelli.
\newblock Asymptotic quantization for probability measures on {R}iemannian
  manifolds.
\newblock {\em ESAIM Control Optim. Calc. Var.}, 22(3):770--785, 2016.

\bibitem{Jos17}
J.~Jost.
\newblock {\em Riemannian geometry and geometric analysis}.
\newblock Springer, Cham, 2017.

\bibitem{Klo12}
B.~Kloeckner.
\newblock Approximation by finitely supported measures.
\newblock {\em ESAIM Control Optim. Calc. Var.}, 18(2):343--359, 2012.

\bibitem{KPU+23}
D.~Krieg, K.~Pozharska, M.~Ullrich, and T.~Ullrich.
\newblock {Sampling recovery in the uniform norm}.
\newblock {\em arXiv preprints}, arXiv:2305.07539, 2023.

\bibitem{KS20}
D.~Krieg and M.~Sonnleitner.
\newblock {Random points are optimal for the approximation of Sobolev
  functions}.
\newblock {\em IMA J. Numer. Anal.}, 44(3):1346--1371, 2024.

\bibitem{LLG+24}
J.~Li, Y.~Ling, J.~Geng, and H.~Wang.
\newblock {Weighted least $\ell_p$ approximation on compact Riemannian
  manifolds}.
\newblock {\em arXiv preprints}, arXiv:2402.19132, 2024.

\bibitem{LW21}
W.~Lu and H.~Wang.
\newblock {Approximation and quadrature by weighted least squares polynomials
  on the sphere}.
\newblock {\em arXiv preprints}, arXiv:2101.03658, 2021.

\bibitem{Mat98}
J.~Matou{\v{s}}ek.
\newblock {The Exponent of Discrepancy Is at Least 1.0669}.
\newblock {\em J. Complexity}, 14:448--453, 1998.

\bibitem{Mha06}
H.~N. Mhaskar.
\newblock Weighted quadrature formulas and approximation by zonal function
  networks on the sphere.
\newblock {\em J. Complexity}, 22(3):348--370, 2006.

\bibitem{Mha10}
H.~N. Mhaskar.
\newblock Eignets for function approximation on manifolds.
\newblock {\em Appl. Comput. Harmon. Anal.}, 29(1):63--87, 2010.

\bibitem{NT06}
E.~Novak and H.~Triebel.
\newblock {Function Spaces in Lipschitz Domains and Optimal Rates of
  Convergence for Sampling}.
\newblock {\em Constr. Approx.}, 23:325--350, 2006.

\bibitem{NW08}
E.~Novak and H.~Wo{\'{z}}niakowski.
\newblock {\em Tractability of multivariate problems. {V}olume {I}: {L}inear
  information}.
\newblock European Mathematical Society (EMS), Z{\"{u}}rich, 2008.

\bibitem{NW10}
E.~Novak and H.~Wo{\'{z}}niakowski.
\newblock {\em Tractability of multivariate problems. {V}olume {II}: {S}tandard
  information for functionals}.
\newblock European Mathematical Society (EMS), Z{\"{u}}rich, 2010.

\bibitem{RS16}
A.~Reznikov and {E.\,B.} Saff.
\newblock The covering radius of randomly distributed points on a manifold.
\newblock {\em Int. Math. Res. Not. IMRN}, 2016(19):6065--6094, 2016.

\bibitem{Shu92}
M.~A. Shubin.
\newblock Spectral theory of elliptic operators on noncompact manifolds.
\newblock {\em Ast{\'{e}}risque}, (207):35--108, 1992.

\bibitem{Str83}
R.~S. Strichartz.
\newblock Analysis of the {L}aplacian on the complete {R}iemannian manifold.
\newblock {\em J. Functional Analysis}, 52(1):48--79, 1983.

\bibitem{Tri83}
H.~Triebel.
\newblock {\em Theory of function spaces}.
\newblock Birkh{\"{a}}user Verlag, Basel, 1983.

\bibitem{Tri86}
H.~Triebel.
\newblock Spaces of {B}esov-{H}ardy-{S}obolev type on complete {R}iemannian
  manifolds.
\newblock {\em Ark. Mat.}, 24(2):299--337, 1986.

\bibitem{Tri92}
H.~Triebel.
\newblock {\em Theory of function spaces {II}}.
\newblock Birkh{\"{a}}user Verlag, Basel, 1992.

\bibitem{Tu11}
L.~W. Tu.
\newblock {\em {An introduction to manifolds}}.
\newblock Springer, New York, 2011.

\bibitem{WS17}
H.~Wang and I.~H. Sloan.
\newblock On filtered polynomial approximation on the sphere.
\newblock {\em J. Fourier Anal. Appl.}, 23(4):863--876, 2017.

\bibitem{WW16}
H.~Wang and K.~Wang.
\newblock {Optimal recovery of Besov classes of generalized smoothness and
  Sobolev classes on the sphere}.
\newblock {\em J. Complexity}, 32:40--52, 2016.

\bibitem{Wen01}
H.~Wendland.
\newblock Local polynomial reproduction and moving least squares approximation.
\newblock {\em IMA J. Numer. Anal.}, 21(1):285--300, 2001.

\bibitem{W04}
H.~Wendland.
\newblock {\em {Scattered Data Approximation}}.
\newblock Cambridge University Press, 2004.

\bibitem{Zad82}
P.~L. Zador.
\newblock Asymptotic quantization error of continuous signals and the
  quantization dimension.
\newblock {\em IEEE Trans. Inform. Theory}, 28(2):139--149, 1982.

\end{thebibliography}

\end{document}